\newif\ifverbose
\newif\ifincludefigure
\newtheorem{theorem}{Theorem}
\newtheorem{proposition}{Proposition}
\newtheorem{lemma}{Lemma}
\theoremstyle{definition}
\newtheorem{definition}{Definition}
\newtheorem{example}{Example}
\newtheorem{remark}{Remark}
\def\R{\mathbf R} 
\def\C{\mathbf C} 
\def\H{\mathbf H} 
\def\E#1{\mathbf E\left(#1\right)} 
\def\P#1{\mathbf P\left(#1\right)} 
\def\I{ i }
\def\({\left(}
\def\){\right)}
\def\hf{\mathfrak B} 
\title{
An integral formula for the powered sum of
the independent, identically and normally distributed random variables
}
\author{
Tamio Koyama
}
\date{}
\begin{document}
\maketitle
\if0
Dear Editor-in-Chief,

I am sending you our manuscript entitled  "An integral formula for the powered sum of the independent, identically and normally distributed random variables" by T. Koyama.

My main research interest is numerical analysis of integrals which appear in statistics. Mathematical theory and techniques enable us efficient evaluations of integrals.

In this paper, I utilized the theory of hyperfunctions to derive an integral formula for the probability density function of the sum of r-th power of standard normal random variable. In the future work, I plan to apply this integral formula to evaluate the probability density function. I expect this numerical calculation to have an application in statistics.

We would like to have the manuscript considered for publication in
Pacific Journal of Mathematics for Industry.
Please let me know of your decision at your earliest convenience.

Sincerely yours,
Tamio Koyama.
\fi
\begin{abstract}
The distribution of the sum of $r$-th power of standard normal random variables
is a generalization of the chi-squared distribution.
In this paper,
we represent the probability density function of the random variable
by an one-dimensional absolutely convergent integral
with the characteristic function.
Our integral formula is expected to be applied
for evaluation of the density function.
Our integral formula is based on the inversion formula,
and we utilize a summation method.
We also discuss on our formula in the view point of hyperfunctions.

\end{abstract}

\if0
keyword: Inversion Formula, Summation method, Hyperfunctions,
\fi

\section{Introduction}\label{intro}
Let $\{X_k\}_{k\in\mathbf N}$ be a sequence of 
the independent, identically distributed random variables,
and suppose the distribution of each variable $X_k$ is 
the standard normal distribution.
Fix a positive integer $r$ and $n$.
In this paper, we discuss 
the sum of the $r$-th power of standard normal random variables.
This distribution function can be written in the form of 
$n$ dimensional integral:
\begin{equation}\label{a8Jun2018}
F(c)
:=\P{\sum_{k=1}^n X_k^r<c}
=\int_{\sum_{k=1}^n x_k^r<c} \frac{1}{(2\pi)^{n/2}}
\exp\(-\frac{1}{2}\sum_{k=1}^n x_k^2\) dx_1\dots dx_n
\end{equation}
In the case where $r=2$,
this integral equals to the cumulative distribution function of
the chi-squared statistics.
The sum of the $r$-th power $\sum_{k=1}^nX_k^r$ is
an generalization of the chi-squared statistics.
The weighted sum $\sum_{k=1}^n w_kX_k^2$,
where $w_k$ is a positive real number, 
of the squared normal variables
is another generalization of the chi-squared statistics.
Numerical analysis of the cumulative distribution function of
the weighted sum 
is discussed
in \cite{castano-lopez} and \cite{takemura-koyama}.
In \cite{marumo-oaku-takemura2015},
Marumo, Oaku, and Takemura discussed 
the numerical analysis of integral \eqref{a8Jun2018}
in the case where $r=3$.
Their approach was
the holonomic gradient method (\cite{NNNOSTT2011} ,Chapter 6 of \cite{dojo}).
They derived a system of differential equations for
the probability density function of $\sum_{k=1}^nX_k^3$.
By numerically solving the system of differential equations,
they evaluate the probability density function.
As a generalization of
the chi-squared statistics or
the sum of cubes of standard normal random variables,
the sum of the $r$-th power ($r\geq 4$)
of standard normal random variables
is a basic quantity in statistics.
Detailed investigation on the quantity can be
expected to be applied to Hypothesis test,
and we are interested in numerical analysis of
the probability density function of the quantity.

In order to derive the system of differential equations
for the probability density function $f(x)$ of $\sum_{k=1}^nX_k^3$.
Marumo, Oaku, and Takemura utilize
the following inversion formula: 
\begin{equation}\label{formal}
f(x)
=
\frac{1}{2\pi}\int_{-\infty}^\infty e^{-itx}\varphi(t)^ndt,
\end{equation}
where $\varphi(t):=\E{\exp\(\I tX_k^3\)}\,(i:=\sqrt{-1})$
is the characteristic function of $X_k^3$.
Note that Equation \eqref{formal} is only a formal equation
in a naive sence.
Actuary, it is not certain whether
the integral in the right-hand side of \eqref{formal} is convergent or not.
It is hard to prove the convergence of the integral
by utilizing with the Laplace approximation.
The discussion in \cite{marumo-oaku-takemura2015}
does not show the convergence of \eqref{formal}
and justify Equation \eqref{formal} by
the theory of Schwartz distributions.
In this paper, we consider 
a justification of the formal equation \eqref{formal} for general $r\geq 3$
from the viewpoint of summation methods and hyperfunctions.
As a result of the consideration, we derive a formula
which represents the probability density function of $\sum_{k=1}^n X_k^r$
as an one-dimensional absolutely convergent integral.
This formula is expected to be applied for evaluation of the density function.

Before starting regorous discussion,
let us give an intuitive discussion on the formal equation \eqref{formal}.
Here, we consider the case where $\varphi(t)$ is
the characteristic function of a general random varialbe $X$.
Note that the distribution of $X$ can be
the exponential distribution, the binomial distribution
or other distribution.
Dividing the domain of the formal integral
in the right-hand side of \eqref{formal},
it can be written as 
\begin{equation}\label{19}
\frac{1}{2\pi}\int_{-\infty}^0 e^{-itx}\varphi(t)^ndt
+
\frac{1}{2\pi}\int_0^\infty e^{-itx}\varphi(t)^ndt.
\end{equation}
Note that the integral in the first term converges
when $\Im x > 0$ holds,
the second term converges when $\Im x < 0$.
Hence, the first and the second term define holomorphic functions
on the upper half plane and the lower half plane respectively.
Since these holomorphic functions has analytic continuation,
the expression \eqref{19} make a sense for $x\in\R\backslash\{0\}$
if $x$ is not a singular point of the holomorphic functions.
This intuitive explanation can be justified
by a summation methods in the case where
random variable $X$ is the sum of $r$-th power of
standard normal random variables.
For general random variables,
this intuitive explanation is justified
by theory of Fourier hyperfunctions.
Theory of hyperfunction introduced in \cite{sato1958},\cite{kawai1970}
has many applications to linear partial differential equations
(see, e.g., \cite{graf}).
Applications of hyperfunctions to numerical analysis are
discussed in \cite{imai} and \cite{ogata-hirayama}.
Our discussion in this paper is  an application of the theory of hyperfunctions
to statistics. 

The organization of this paper is as follows.
In Section \ref{sec:InvForm},
we review the inversion formulae
in the probability theory and the Fourier analysis,
and we justify the intuitive discussion in Section \ref{intro}
by a summation method.
In Section \ref{sec:hyperfunction},
we review the theory of hyperfunctions of
a single variable based on \cite{sato1958} and \cite{kaneko1988},
and we justify the intuitive discussion in Section \ref{intro}
from the view point of hyperfunctions.
In Section \ref{PowerSum},
we apply the inversion formula given in Section \ref{sec:InvForm}
to the sum of $r$-th power of standard normal random variables.
We also calculate the limit in the boundary-value representation.

\section{Inversion Formula}\label{sec:InvForm}
In this section, we review the inversion formulae
in the probability theory and the Fourier analysis.

In the probability theory,
the following L\'evy's inversion formula is most famous.
\begin{theorem}[L\'evy's inversion formula]
Let $X$ be a random variable
and $\varphi(t)$ be the characteristic function of $X$.
Then, for $a<b$,
$$
\lim_{R\uparrow\infty}\frac{1}{2\pi}
\int_{-R}^R \frac{e^{-ita}-e^{-itb}}{it}\varphi(t)dt
=\frac{1}{2}\P{X=a}+\P{a<X<b}+\frac{1}{2}\P{X=b}
$$
holds.
In the case where $\int_{-\infty}^\infty|\varphi(t)|dt<\infty$ holds,
$X$ has continuous probability density function $f$
and
\begin{equation}\label{c7Jun2018}
f(x)
=
\frac{1}{2\pi}\int_{-\infty}^\infty e^{-itx}\varphi(t)dt
\end{equation}
holds.
\end{theorem}
\begin{proof}
see, e.g., \cite[p175]{williams1991probability}.
\end{proof}

If $X$ has a probability density function $f$, 
then
the characteristic function of $X$ is
the Fourier transformation of $f$, i.e.,
$\varphi(t)=\int_{-\infty}^\infty e^{itx}f(x)dx$.
If $f$ is a rapidly decreasing function,
the inversion formula \eqref{c7Jun2018} is holds immediately.
If $f$ is a square-integrable function,
$\int_{-\infty}^\infty f(x)^2dx < \infty$, 
the inversion formula is 
\begin{equation}\label{b7Jun2018}
f(x)=\frac{1}{2\pi}\lim_{R\rightarrow\infty}\int_{-R}^R e^{-itx}\varphi(t)dt.
\end{equation}
Note that the right hand-side of \eqref{b7Jun2018} is improper integral.
Even if $f$ is square-integrable,
integrand $e^{-itx}\varphi(t)$ may not be Lebesgue integrable,
i.e., 
$\int_{-\infty}^\infty |e^{-itx}\varphi(t)|dt$ can be infinite.
We also note that the probability density function $f$
may not be square-integrable.
For example, the following probability density function $g(x)$
is not square-integrable:
$$
g(x):=
\begin{cases}
\frac{1}{2\sqrt{x}} & (0<x<1)\\
0 & (\text{else})
\end{cases}
$$
If $f$ is a slowly increasing function,
Equation \eqref{c7Jun2018} can be justified
as an equation of Schwartz distributions. 

The following proposition is a justification of
the intuitive discussion in Section \ref{intro}
by a summation method.
\begin{proposition}\label{a7Jun2018}
Let $f$ be a probability density function
and $\varphi(t):=\int_{-\infty}^\infty e^{ity}f(y)dy$ be
the characteristic function of $f$.
For any continuous point $x$ of $f$, the equation
$$
f(x)
=
\frac{1}{2\pi}\lim_{\varepsilon\rightarrow +0}\(
 \int_{-\infty}^0 e^{-i(x+i\varepsilon)t}\varphi(t)dt
+\int_0^\infty   e^{-i(x-i\varepsilon)t}\varphi(t)dt
\)
$$
holds.
\end{proposition}

We utilize a fact in \cite[p28, Note 1.3]{kaneko1988}
with a small change.
\begin{lemma}\label{b8Jun2018}
Let $K(x)$ and $\varphi(x)$ be real-valued functions on $\mathbf R$.
Suppose that $K(x)$ is absolutely integrable and
$\int_{-\infty}^\infty K(x)dx =1$ holds,
and that $\varphi(x)$ is continuous ans bounded.
For a positive number $\varepsilon>0$,
put
$$
\varphi_\varepsilon(x):=\frac{1}{\varepsilon}
\int_{\mathbf R}K\(\frac{t-x}{\varepsilon}\)\varphi(t)dt.
$$
Then, for any $x\in\mathbf R$, we have
$
\varphi(x) = \lim_{\varepsilon\rightarrow + 0}\varphi_\varepsilon(x).
$
\end{lemma}
\begin{proof}
Since $\varphi_\varepsilon(x)$ is decomposed as
\begin{align*}
\varphi_\varepsilon(x)
&=\varphi(x)
  \int_{-\infty}^\infty K\(\frac{x-t}{\varepsilon}\)\frac{dt}{\varepsilon}
 -\int_{-\infty}^\infty \(\varphi(x)-\varphi(t)\)
     K\(\frac{x-t}{\varepsilon}\)\frac{dt}{\varepsilon}\\
&=\varphi(x)
  -\int_{-\infty}^\infty \(\varphi(x)-\varphi(t)\)
     K\(\frac{x-t}{\varepsilon}\)\frac{dt}{\varepsilon},
\end{align*}
it is enough to show that the second term converges to zero.
We decompose the integral domain into
$|x-t|\geq\varepsilon$ and $|x-t|<\varepsilon$.
On the first domain, we have
\begin{align*}
\left|
\int_{|x-t|\geq\varepsilon}
     \(\varphi(x)-\varphi(t)\)
     K\(\frac{x-t}{\varepsilon}\)\frac{dt}{\varepsilon}
\right|
&\leq
\(2\sup_x |\varphi(x)| \)
\int_{|x-t|\geq\varepsilon}
     \left| K\(\frac{x-t}{\varepsilon}\) \right|
     \frac{dt}{\varepsilon}\\
&=
\(2\sup_x|\varphi(x)|\)
\int_{|s|\geq 1/\sqrt{\varepsilon}}
     \left| K\(s\)\right| ds.
\end{align*}
Note that $\sup_x|\varphi(x)|$ is finite since $\varphi(x)$ is bounded.
Since $K(x)$ is absolutely integrable,
$\int_{|s|\geq 1/\sqrt{\varepsilon}} \left| K\(s\)\right| ds$
converges to $0$ as $\varepsilon\rightarrow+0$.
Consequenty, we have
$$
\left|
\int_{|x-t|\geq\varepsilon}
     \(\varphi(x)-\varphi(t)\)
     K\(\frac{x-t}{\varepsilon}\)\frac{dt}{\varepsilon}
\right|
\rightarrow 0
\quad (\varepsilon\rightarrow +0).
$$
On the second domain, we have
\begin{align*}
\left|
\int_{|x-t|<\varepsilon}
     \(\varphi(x)-\varphi(t)\)
     K\(\frac{x-t}{\varepsilon}\)\frac{dt}{\varepsilon}
\right|
&\leq
\(\sup_{|x-t|<\varepsilon} |\varphi(x)-\varphi(t)| \)
\int_{|s|\geq 1/\sqrt{\varepsilon}}
     \left| K\(s\)\right| ds\\
&\leq
\(\sup_{|x-t|<\varepsilon} |\varphi(x)-\varphi(t)| \)
     \int_{-\infty}^\infty\left| K\(s\)\right| ds.
\end{align*}
Since $\varphi(x)$ is continuous,
$\sup_{|x-t|<\varepsilon} |\varphi(x)-\varphi(t)|$
goes to zero as $\varepsilon\rightarrow +0$.
Consequently, we have
$$
\left|
\int_{|x-t|<\varepsilon}
     \(\varphi(x)-\varphi(t)\)
     K\(\frac{x-t}{\varepsilon}\)\frac{dt}{\varepsilon}
\right|
\rightarrow 0
\quad (\varepsilon\rightarrow +0).
$$
Therefore, we have
$
\varphi(x) = \lim_{\varepsilon\rightarrow + 0}\varphi_\varepsilon(x).
$
\end{proof}

\begin{proof}[Proof of Proposition \ref{a7Jun2018}]
Fix the continuous point $x$ of $f$.
Decompose $f$ into a sum of two continuous functions $f_1$ and $f_2$
where $f_1$ is support compact
and $f_2$ equals to zero on a neighborhood of $x$.
Put $\varphi_j(t):=\int_{-\infty}^\infty e^{ity}f_j(y)dy\,(j=1,2)$.
By the Fubini's theorem, we have
\begin{align*}
&
\frac{1}{2\pi}\(
 \int_{-\infty}^0 e^{-i(x+i\varepsilon)t}\varphi_1(t)dt
+\int_0^\infty   e^{-i(x-i\varepsilon)t}\varphi_1(t)dt
\)\\
&=
\frac{1}{2\pi}\(
 \int_{-\infty}^\infty f_1(y)
    \int_{-\infty}^0 e^{i(y-x-i\varepsilon)t}
 dtdy
+\int_{-\infty}^\infty f_1(y)
    \int_0^\infty   e^{i(y-x+i\varepsilon)t}dtdy
\)\\
&=
 \int_{-\infty}^\infty f_1(y)
    \frac{1}{\pi} 
    \frac{\varepsilon}{(y-x)^2+\varepsilon^2}
  dy
\end{align*}
Apply Lemma \ref{b8Jun2018} with $K(x)=\frac{1}{\pi(x^2+1)}$,
then we have
$$
 \int_{-\infty}^\infty f_1(y)
    \frac{1}{\pi} 
    \frac{\varepsilon}{(y-x)^2+\varepsilon^2}
  dy
\rightarrow f_1(x)
\quad (\varepsilon\rightarrow +0).
$$
Analogously, we have
\begin{align*}
\frac{1}{2\pi}\(
 \int_{-\infty}^0 e^{-i(x+i\varepsilon)t}\varphi_2(t)dt
+\int_0^\infty   e^{-i(x-i\varepsilon)t}\varphi_2(t)dt
\)
&=
\frac{1}{2\pi}
 \int_{-\infty}^\infty f_2(y)
    \(\frac{1}{i(y-x-i\varepsilon)}-\frac{1}{i(y-x+i\varepsilon)}\)
  dy
\end{align*}
Since $f_2$ equals to zero on a neighborhood of $x$,
for sufficiently small $\delta>0$, 
the above integral equals to 
\begin{align*}
&
\frac{1}{2\pi}
 \int_{|y-x|>\delta}
    f_2(y)
    \(\frac{1}{i(y-x-i\varepsilon)}-\frac{1}{i(y-x+i\varepsilon)}\)
  dy
\end{align*}
By the Lebesgue's dominated convergence theorem, 
this integral converges to $0$
as $\varepsilon\rightarrow+0$.
Hence, we have
\begin{align*}
&
\frac{1}{2\pi}\lim_{\varepsilon\rightarrow +0}\(
 \int_{-\infty}^0 e^{-i(x+i\varepsilon)t}\varphi(t)dt
+\int_0^\infty   e^{-i(x-i\varepsilon)t}\varphi(t)dt
\)\\
&=\sum_{j=1}^2
\frac{1}{2\pi}\lim_{\varepsilon\rightarrow +0}\(
 \int_{-\infty}^0 e^{-i(x+i\varepsilon)t}\varphi_j(t)dt
+\int_0^\infty   e^{-i(x-i\varepsilon)t}\varphi_j(t)dt
\)
=f_1(x) = f(x).
\end{align*}
\end{proof}

\section{Perspective of Hyperfunction}\label{sec:hyperfunction}
In this section,
we briefly review the theory of hyperfunctions
and justify the intuitive discussion
in Section \ref{intro} from the view point of hyperfunctions.

In the first, we review the theory of hyperfunctions of
a single variable based on \cite{sato1958} and \cite{kaneko1988}.
Let $\mathcal O$ be the sheaf of holomorphic functions on
the complex plane $\mathbf C$.
The sheaf of the hyperfunctions $\mathcal B$ of a single variable is defined as
the $1$-th 
derived sheaf $\mathcal H_{\mathbf R}^1(\mathcal O)$ of $\mathcal O$.
The global section $\mathcal B(\mathbf R)$ is
the inductive limit
$\varinjlim_{U\supset\mathbf R}\mathcal O(U\backslash\mathbf R)/O(U)$
with respect to the family of complex neighborhoods $U\supset\mathbf R$.
The elements of $\mathcal B(\mathbf R)$ are called hyperfunctions on $\mathbf R$.
Any hyperfunction on $\mathbf R$ can be represented as
a equivalent class $[F(z)]$ with a representative
$F(z)\in\mathcal O(U\backslash\mathbf R)$.

Let $L_{1,loc}(\mathbf R)$ be
the spaces of locally integrable functions on $\mathbf R$.
There is a natural embedding $\iota$
from $L_{1,loc}(\mathbf R)$ to $\mathcal B(\mathbf R)$.
When an integrable function $f\in L_1(\mathbf R)$ satisfies some condition,
we can take a representative $F(z)$ of $\iota(f)$
as a holomorphic function on $\mathbf C\backslash\mathbf R$.
\begin{proposition}[M.~Sato]\label{sato-1}
Let $f(x)$ be an integrable function on $\mathbf R$
such that the integral $\int_{-\infty}^\infty (x^2+1)^{-1}f(x)dx $ is finite.
Take a constant $\alpha\in\C-\R$, and put 
\begin{align*}
F(z)
&=\frac{1}{2\pi i}\int_{-\infty}^\infty f(x)
\(\frac{1}{x-z}-\frac{1}{x-\alpha}\)dx
\quad (z\in\C\backslash\R).
\end{align*}
Then we have $\iota(f)=[F]$.
\end{proposition}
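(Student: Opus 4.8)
The plan is to establish holomorphy of $F$ away from $\R$, and then to show that $F$ represents $\iota(f)$ by a localization argument, comparing $F$ near each bounded interval with the Cauchy transform through which $\iota$ is defined there.

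First I would rewrite the kernel as $\frac{1}{x-z}-\frac{1}{x-\alpha}=\frac{z-\alpha}{(x-z)(x-\alpha)}$, so that for $z$ ranging over a compact subset $K$ of $\C\backslash\R$ the integrand is bounded by $C_K(x^2+1)^{-1}|f(x)|$ with $C_K$ independent of $x$ (using that $\R$ and $\alpha$ lie at positive distance from $K$, and that $|x-z|^{-1}|x-\alpha|^{-1}$ is $O(x^{-2})$ at infinity). The hypothesis that $\int(x^2+1)^{-1}f(x)\,dx$ is finite then gives absolute and locally uniform convergence, whence $F\in\mathcal O(\C\backslash\R)$ by Morera's theorem (or by differentiation under the integral sign). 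This is the routine part; I note that it is precisely here that the regularizing term $\frac{1}{x-\alpha}$ is needed, since the bare Cauchy transform $\int\frac{f(x)}{x-z}\,dx$ need not converge under only this growth assumption.

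To identify the hyperfunction, I would use that $\mathcal B$ is a sheaf: since $\R$ is covered by bounded open intervals, it suffices to check that $[F]$ and $\iota(f)$ have the same restriction to each such interval. Fix bounded open intervals $I'\Subset I$ with $\overline{I'}\subset I$. On $I'$ one has $f=\chi_I f$, and $\chi_I f\in L_1(\R)$ has bounded support, so by the definition of $\iota$ (on $L^1$-functions of bounded support it is the Cauchy transform — equivalently, that transform has boundary-value jump equal to the function, by Sokhotski--Plemelj) we get $\iota(f)|_{I'}=\iota(\chi_I f)|_{I'}=[G_I]|_{I'}$, where $G_I(z)=\frac{1}{2\pi i}\int_I\frac{f(x)}{x-z}\,dx$ is holomorphic off $\overline I$. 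It then remains to see that $F-G_I$ extends holomorphically across $I'$: splitting the integral defining $F$ over $I$ and over $\R\backslash I$, the $I$-part equals $G_I$ minus the constant $\frac{1}{2\pi i}\int_I\frac{f(x)}{x-\alpha}\,dx$ (finite since $I$ is bounded), while the $(\R\backslash I)$-part, by the same domination as above and the fact that $\overline{I'}$ is at positive distance from $\R\backslash I$, defines a function holomorphic on a full complex neighborhood of $\overline{I'}$. Hence $F-G_I$ lies in $\mathcal O$ near $I'$, so $[F]|_{I'}=\iota(f)|_{I'}$, and letting $I'$ vary yields $[F]=\iota(f)$.

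The main obstacle I anticipate is conceptual rather than computational: pinning down the precise form of the embedding $\iota\colon L_{1,loc}(\R)\to\mathcal B(\R)$ used in \cite{sato1958},\cite{kaneko1988} and invoking correctly that its restriction to $L^1$-functions of bounded support is the Cauchy transform. Once that identification is in hand, the argument reduces to the two elementary dominated-convergence estimates above. (If ``integrable'' is read as $f\in L_1(\R)$ globally, one can skip the localization: the global Cauchy transform then converges on $\C\backslash\R$, represents $\iota(f)$, and differs from $F$ by a constant, which is holomorphic on all of $\C$ and hence vanishes in $\mathcal O(U\backslash\R)/\mathcal O(U)$.)
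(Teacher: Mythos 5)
The paper does not actually prove this proposition---its ``proof'' is a bare citation to \cite{sato1958}---so there is no in-paper argument to match yours against; what you have written is a correct, self-contained proof along the standard lines. The domination $\left|\frac{1}{x-z}-\frac{1}{x-\alpha}\right|=\frac{|z-\alpha|}{|x-z|\,|x-\alpha|}\leq C_K(x^2+1)^{-1}$ for $z$ in a compact subset of $\C\backslash\R$ does give local uniform absolute convergence and hence $F\in\mathcal O(\C\backslash\R)$, and your localization is the right mechanism: since $\mathcal B$ is a sheaf, it suffices to compare $[F]$ with $\iota(f)$ over each bounded interval $I'$ with $\overline{I'}\subset I$, where $\iota(f)$ is represented by the Cauchy transform $G_I$ of $\chi_I f$, and the difference $F-G_I$ is a finite constant plus an integral over $\R\backslash I$ that is uniformly dominated on a full complex neighborhood of $\overline{I'}$ and hence dies in $\mathcal O(U\backslash\R)/\mathcal O(U)$. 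Two small remarks. First, the ``obstacle'' you flag---that $\iota$ restricted to compactly supported integrable functions is the Cauchy transform---is not something to be proved but is the definition of the embedding in \cite{kaneko1988}; the Sokhotski--Plemelj jump formula only serves to check that this definition is compatible with regarding continuous functions as hyperfunctions. Second, your parenthetical shortcut for $f\in L_1(\R)$ is correct but is not the case the paper needs: the proposition is applied to characteristic functions $\varphi_X$, which are bounded and continuous but generally not globally integrable, so ``integrable'' must be read as locally integrable with $\int(x^2+1)^{-1}|f(x)|\,dx<\infty$, and the localization argument you give is the one that is actually required.
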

\begin{proof}
See, \cite{sato1958}.
\end{proof}

We call a measure $\mu$ on $\mathbf R$ to be locally integrable
when $\mu(K)$ is finite for any compact set $K\subset\mathbf R$.
We denote by $M_{loc}$ the space of the locally integrable measures
on $\mathbf R$.
Analogous to the case of $L_{1,loc}(\mathbf R)$,
there is a natural embedding $\iota$ from $M_{loc}$ to $\mathcal B(\mathbf R)$,
and the following proposition holds:
\begin{proposition}[M.~Sato]\label{sato-2}
Let $\mu$ be a measure on $\mathbf R$ such that
the integral $\int_{\mathbf R} (x^2+1)^{-1}\mu(dx) $ is finite.
Take a constant $\alpha\in\C-\R$, and put 
\begin{align*}
F(z)
&=\frac{1}{2\pi i}\int_{\mathbf R}
\(\frac{1}{x-z}-\frac{1}{x-\alpha}\)\mu(dx)
\quad (z\in\C\backslash\R)
\end{align*}
Then we have $\iota(\mu)=[F]$.
\end{proposition}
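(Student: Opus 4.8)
The plan is to follow the proof of Proposition~\ref{sato-1}; the only properties of $f(x)\,dx$ used there are that it is a locally finite measure with $\int_{\mathbf R}(x^2+1)^{-1}f(x)\,dx<\infty$, so the same argument goes through with $f(x)\,dx$ replaced by $\mu(dx)$. First I would check that $F$ is well defined and holomorphic on $\mathbf C\setminus\mathbf R$: since $z,\alpha\notin\mathbf R$ one has the elementary bound
$$
\left|\frac{1}{x-z}-\frac{1}{x-\alpha}\right|
=\frac{|z-\alpha|}{|x-z|\,|x-\alpha|}
\le\frac{C}{1+x^2}\qquad(x\in\mathbf R),
$$
with $C$ bounded on compact subsets of $\mathbf C\setminus\mathbf R$; combined with $\int(x^2+1)^{-1}\mu(dx)<\infty$ this gives absolute convergence, and, since the same majorant bounds the $z$-derivative of the integrand, holomorphy of $F$ by differentiation under the integral sign. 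Thus $[F]$ is a legitimate element of $\mathcal B(\mathbf R)$ and it remains to identify it with $\iota(\mu)$.

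Because $\mathcal B$ is a flabby sheaf, it is enough to prove $[F]=\iota(\mu)$ after restriction to an arbitrary bounded open interval $I=(a,b)$. Fix such an $I$, pick a compact interval $K=[a',b']$ with $a'<a<b<b'$, and decompose $\mu=\mu'+\mu''$ with $\mu'=\mu|_K$, $\mu''=\mu|_{\mathbf R\setminus K}$, giving a corresponding splitting $F=F'+F''$. For the far part, the integrand $z\mapsto\frac{1}{x-z}-\frac{1}{x-\alpha}$ is holomorphic for $z$ in the open neighbourhood $\mathbf C\setminus\big((-\infty,a']\cup[b',\infty)\big)$ of $\overline I$, and the majorant above --- now uniform for $x\in\mathbf R\setminus K$ and $z$ in compact subsets of that neighbourhood --- shows that $F''$ extends holomorphically across $I$; hence $[F'']$ vanishes on $I$. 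Since $\mu''|_I=0$ and the embedding $\iota$ is of local nature (it commutes with restriction to open subsets), $\iota(\mu'')$ also vanishes on $I$. So the assertion on $I$ follows once we show $[F']=\iota(\mu')$, where $\mu'$ is now a \emph{finite} measure supported in the compact set $K$ (both $[F']$ and $\iota(\mu')$ are then supported in $K$, since $F'$ extends holomorphically across $\mathbf R\setminus K$).

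For this last step, note that $-\frac{1}{2\pi i}\int_K\frac{1}{x-\alpha}\mu'(dx)$ is a constant, so it contributes the zero hyperfunction and $[F']=\big[\frac{1}{2\pi i}\int_K\frac{\mu'(dx)}{x-z}\big]$. Pairing this hyperfunction with a real-analytic function $\varphi$ defined near $K$, using a contour $\gamma$ encircling $K$ once in the positive sense, I would interchange the two integrations (legitimate because $\mu'$ is finite and the integrand on $\gamma$ is bounded) to get
\begin{align*}
\Big\langle\big[\tfrac{1}{2\pi i}{\textstyle\int_K}\tfrac{\mu'(dx)}{x-z}\big],\varphi\Big\rangle
&=-\oint_\gamma\Big(\frac{1}{2\pi i}\int_K\frac{\mu'(dx)}{x-z}\Big)\varphi(z)\,dz\\
&=\frac{1}{2\pi i}\int_K\Big(\oint_\gamma\frac{\varphi(z)}{z-x}\,dz\Big)\mu'(dx)
=\int_K\varphi(x)\,\mu'(dx),
\end{align*}
the last equality by Cauchy's integral formula. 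The right-hand side is $\langle\iota(\mu'),\varphi\rangle$, and since a hyperfunction supported in the compact set $K$ is determined by its values on real-analytic functions near $K$, this yields $[F']=\iota(\mu')$, completing the proof.

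I expect the only real obstacle to be the bookkeeping about the embedding $\iota$ on general locally integrable measures: one must know that it is the sheaf morphism characterised locally, on compactly supported pieces, by the Cauchy-transform formula (equivalently by the duality pairing used above), and that it commutes with restriction to subintervals --- facts that should already be part of how $M_{loc}\hookrightarrow\mathcal B(\mathbf R)$ is set up, in parallel with the $L_{1,loc}$ case. Granting that, the analytic content is exactly the majorant estimate together with one application of Fubini's theorem and Cauchy's formula, precisely as in Sato's argument for Proposition~\ref{sato-1}.
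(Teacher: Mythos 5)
Your argument is correct in substance, but note that the paper itself offers no proof of Proposition~\ref{sato-2}: the statement is presented as the measure-theoretic analogue of Proposition~\ref{sato-1}, whose ``proof'' is simply a citation of Sato's 1958 paper, and no proof environment follows Proposition~\ref{sato-2} at all. So you are not reproducing the paper's route; you are supplying a self-contained argument where the paper defers to the literature. Your proof is the standard one: holomorphy of the Cauchy-type transform off $\mathbf R$ via the majorant $|z-\alpha|/(|x-z|\,|x-\alpha|)\le C(1+x^2)^{-1}$, localization to a bounded interval, splitting off the part of $\mu$ living far from that interval (whose transform continues holomorphically across it and hence contributes the zero hyperfunction there), and identification of the remaining compactly supported piece through the duality between $\mathcal B_K$ and germs of real-analytic functions on $K$, using Fubini and Cauchy's integral formula; the sign conventions in your pairing computation are consistent with $\delta_{x_0}=\bigl[-\tfrac{1}{2\pi i}\tfrac{1}{z-x_0}\bigr]$, so the computation checks out. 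Two small remarks. First, the localization step uses only that $\mathcal B$ is a sheaf (two sections agreeing on each member of an open cover coincide); flabbiness is not the relevant property. Second, as you yourself point out, the argument is only as complete as the definition of the embedding $\iota\colon M_{loc}\to\mathcal B(\mathbf R)$, which the paper never spells out: you need that $\iota$ is local and that on finite compactly supported measures it is characterized by $\langle\iota(\nu),\varphi\rangle=\int\varphi\,d\nu$ for real-analytic $\varphi$. Any reasonable construction of $\iota$ has these properties, so this is bookkeeping rather than a gap, but it is the one place where your proof leans on conventions the paper leaves implicit. What your approach buys is an actual verifiable proof in place of a citation, at no cost beyond the standard duality machinery for hyperfunctions with compact support.
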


Let $I$ be a neighborhood of $0\in\mathbf R$,
and put $I^*:= I\backslash \{0\}$.
A function $F(z)$ holomorphic on a tubular domain
$
\mathbf R+\I I^*
:=\left\{
z\in\mathbf C \vert \Im z \in I^*
\right\}
$
is said to be slowly increasing
if for any compact subset $K\subset I^*$ and any $\varepsilon>0$,
there exists $C>0$ such that
$$
|F(z)| \leq C e^{\varepsilon |\Re z|} 
$$
uniformly for $z\in \mathbf R+\I K$.
A hyperfunction $f$ on $\mathbf R$ is said to be slowly increasing 
if we can take a slowly increasing function $F(z)$ as
its representative, i.e.,
there exists a slowly increasing function $F(z)$
such that $f=[F(z)]$.
Put
\begin{align*}
\chi_+(z) &:= \frac{e^z}{1+e^z},&
\chi_-(z) &:= \frac{e^{-z}}{1+e^{-z}}.
\end{align*}
The following definition is a specialization into the case of a single variable
of the general definition in \cite{kaneko1988}.
\begin{definition}\label{def:Fourier}
Let a slowly increasing function $F(z)$ 
on a tubular domain $\mathbf R+\I I^*$ be
a representative of a slowly increasing hyperfunction
$f\in\mathcal B(\mathbf R)$.
Take a positive number $y\in I^*$.
We define the Fourier transform of $f=[F(z)]$
as $\mathcal Ff=[G(w)]$ where
$$
G(w)=
\begin{cases}
 \int_{\Im z=+y} e^{-\I wz}\chi_-(z)F(z) dz
-\int_{\Im z=-y} e^{-\I wz}\chi_-(z)F(z) dz
& (\Im w > 0)\\
-\int_{\Im z=+y} e^{-\I wz}\chi_+(z)F(z) dz
+\int_{\Im z=-y} e^{-\I wz}\chi_+(z)F(z) dz
& (\Im w < 0).
\end{cases}
$$
Here,
for $y\in\mathbf R$ and a function $\varphi(z)$ with a complex variable $z$,
we denote the integral
$
\int_{\mathbf R} \varphi(x+\I y) dx
$
by
$
\int_{\Im z = y} \varphi(z)dz.
$
\end{definition}
\if0
In the rest of this section, we prepare some notations.
We denote by $\H$ and $\overline\H$
the upper half plane $\{z\in\mathbf C\vert \Im z >0\}$
and its closure $\{z\in\mathbf C\vert \Im z \geq 0\}$
respectively.
For a complex number $\alpha\in\mathbf C$, we put
\begin{align*}
\alpha\H &= \{\alpha z \vert z \in\H\}, &
\alpha\overline\H &= \{\alpha z \vert z \in\overline\H\}.
\end{align*}
\fi

In the second, we give and show an equation
corresponding to the formal equation \eqref{formal}.
Let $X$ be a random variable.
The distribution $\mu_X$ of $X$ and 
the characteristic function $\varphi_X(t)=\E{e^{\I tX}}$ of $X$
can be regarded as hyperfunctions.
We denote them by $\iota\mu_x$ and $\iota\varphi_X$ respectively.
The following equation corresponds to the formal equation
\eqref{formal}.
\begin{equation}\label{2}
\mathcal F(\iota \varphi_X) = 2\pi \iota \mu_X.
\end{equation}
Since the characteristic function can be regarded as
the inverse Fourier transformation of the distribution
under a suitable condition,
the equation \eqref{2} is an inversion formula.

We calculate defining functions of $\iota\mu_X$ and $\iota\varphi_X$.
Let $\alpha\in\C\backslash\R$ be a constant.
Since $\mu_X$ is a probability measure on $\mathbf R$,
we have
$$
\left|\int_{\mathbf R}\frac{1}{(x^2+1)}\mu_X(dx) \right|
\leq \int_{\mathbf R}\frac{1}{|x^2+1|}\mu_X(dx)
\leq \int_{\mathbf R}\mu_X(dx)
= 1
<\infty
$$
and
$$
\left|\int_{\mathbf R} \frac{1}{x-\alpha}\mu_X(dx)\right|
\leq \int_{\mathbf R} \frac{1}{|x-\alpha|}\mu_X(dx)
\leq \int_{\mathbf R} \frac{1}{|\Im\alpha|}\mu_X(dx)
= \frac{1}{|\Im\alpha|}
<\infty.
$$
By Proposition \ref{sato-2}, we have
\begin{equation}\label{20}
\iota\mu_X
=\left[\frac{1}{2\pi i}\int_{\mathbf R}
\(\frac{1}{x-z}-\frac{1}{x-\alpha}\)\mu_X(dx)
\right]
=\left[\frac{1}{2\pi i}\int_{\mathbf R}\frac{1}{x-z}\mu_X(dx)\right].
\end{equation}
Since the estimation 
$
\left|
 \int_{-\infty}^\infty \varphi_X(t)(t^2+1)^{-1}dt
\right|
<\infty
$
holds, we can apply Proposition \ref{sato-1} to $\varphi_X(t)$.
\ifverbose
In fact, we have
\begin{align*}
\left|
 \int_{-\infty}^\infty \frac{\varphi_X(t)}{t^2+1}dt
\right|
&\leq
\int_{-\infty}^\infty
 \left|\frac{\varphi_X(t)}{t^2+1}\right|
dt
\leq \int_{-\infty}^\infty \frac{1}{t^2+1}dt
= \left[\arctan(t)\right]_{-\infty}^\infty
= \pi.
\end{align*}
\fi
Hence, we have
$$
\iota\varphi_X=\left[
\frac{1}{2\pi\I}
\int_{-\infty}^\infty \varphi_X(t)\left(
  \frac{1}{t-w}-\frac{1}{t-\alpha}
\right)dt
\right].
$$

We calculate the Fourier transformation of $\iota\varphi_X$.
\begin{lemma}\label{1}
For a random variable $X$,
the hyperfunction $\iota\varphi_X$ corresponding to
the characteristic function $\varphi_X$ of $X$
is slowly increasing.
\end{lemma}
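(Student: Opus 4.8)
The plan is to exhibit an explicit slowly increasing representative of $\iota\varphi_X$ and verify the growth bound directly. We already have from Proposition~\ref{sato-1} the representative
\[
\Phi(w)=\frac{1}{2\pi\I}\int_{-\infty}^\infty \varphi_X(t)\left(\frac{1}{t-w}-\frac{1}{t-\alpha}\right)dt,
\]
which is holomorphic on $\C\backslash\R$, hence on the tubular domain $\mathbf R+\I I^*$ for any interval neighborhood $I$ of $0$. So $I^*$ can in fact be taken as $\R\backslash\{0\}$, and it suffices to show: for every compact $K\subset\R\backslash\{0\}$ and every $\varepsilon>0$ there is $C>0$ with $|\Phi(x+\I y)|\le C e^{\varepsilon|x|}$ for all $x\in\R$, $y\in K$.

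First I would use $|\varphi_X(t)|\le 1$ to bound the integrand in modulus by $|t-w|^{-1}+|t-\alpha|^{-1}$. The second term contributes a constant depending only on $\alpha$. For the first term, write $w=x+\I y$ with $|y|\ge\delta>0$ (where $\delta=\operatorname{dist}(K,0)$) and $|y|\le M$ on $K$; then $|t-w|\ge\max(|t-x|,\delta)$, so
\[
\int_{-\infty}^\infty\frac{dt}{|t-w|}
\]
diverges logarithmically — this is the main obstacle: the naive modulus bound is not integrable at infinity, because $\varphi_X$ need not decay. The fix is to not throw away the oscillation entirely: I would split the $t$-integral at $|t|=R$ for a cutoff $R=R(x)$ chosen comparably to $|x|$, say $R=2|x|$. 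On $|t|\le R$ the bound $|t-w|^{-1}\le\delta^{-1}$ gives a contribution $O(R)=O(|x|)$, which is certainly $O(e^{\varepsilon|x|})$ for any $\varepsilon>0$ once $|x|$ is large, and bounded for $|x|$ small. On $|t|>R=2|x|$ we have $|t-x|\ge |t|/2$, so $|t-w|^{-1}\le 2/|t|$, and this is still only logarithmically integrable; so here I would instead integrate by parts once, using that $\varphi_X$ is $C^1$ with $\varphi_X'(t)=\I\,\E{X e^{\I tX}}$ — but $\E{|X|}$ may be infinite, so a cleaner route is to use the resolvent identity $\frac1{t-w}-\frac1{t-\alpha}=(w-\alpha)\frac{1}{(t-w)(t-\alpha)}$ from the very start, which makes the integrand $O(|t|^{-2})$ at infinity and hence absolutely integrable, with
\[
|\Phi(w)|\le\frac{|w-\alpha|}{2\pi}\int_{-\infty}^\infty\frac{dt}{|t-w|\,|t-\alpha|}.
\]

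With this form the estimate is routine: for $y\in K$ the integral $\int_{\R}\frac{dt}{|t-w||t-\alpha|}$ is bounded uniformly in $x$ by a constant depending only on $\delta$ and $\alpha$ (split at the midpoint between $\Re w=x$ and $\Re\alpha$; each factor controls one half), so $|\Phi(x+\I y)|\le C(K)\,(|x|+C')\le C''e^{\varepsilon|x|}$, giving the slowly increasing bound with room to spare (indeed only linear, not exponential, growth is needed). I would then remark that $[\Phi]=\iota\varphi_X$ by Proposition~\ref{sato-1}, so $\iota\varphi_X$ admits a slowly increasing representative and is therefore slowly increasing, which is exactly the claim; in particular Definition~\ref{def:Fourier} applies to it. The only subtlety worth stating carefully is the uniform-in-$x$ bound on the last integral, which is where the separation of $w$ and $\alpha$ from each other and from the real axis (on the compact set $K$) is used.
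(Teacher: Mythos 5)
Your proposal is correct and follows essentially the same route as the paper: both pass to the Sato representative, combine the two fractions into $(w-\alpha)/\bigl((t-w)(t-\alpha)\bigr)$ to obtain exactly the paper's inequality \eqref{15}, and then estimate $\int_{\R}|t-w|^{-1}|t-\alpha|^{-1}\,dt$ for $\Im w$ in a compact set away from $\R$ to conclude at most polynomial, hence slowly increasing, growth in $\Re w$. The only difference is in the final estimate --- the paper splits the $t$-integral at $\pm(|\Re w|+1)$ and settles for a bound linear in $|\Re w|$, while you assert uniform boundedness of that integral, which is true but whose one-line ``each factor controls one half'' justification is slightly loose (on the half containing $\Re w$ the integral of $1/|t-w|$ alone diverges logarithmically, so both factors are still needed there); either way the required slowly increasing bound follows with room to spare.
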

\begin{proof}
For simplicity, we assume $\alpha=\sqrt{-1}$ without loss of generality.
The absolute value of the representative of $\iota\varphi_X$
satisfies the inequality
\begin{equation}\label{15}
\left|
\frac{1}{2\pi\I}
\int_{-\infty}^\infty \varphi_X(t)\left(
  \frac{1}{t-w}-\frac{1}{t-\alpha}
\right)dt
\right|
\leq
\frac{|w-\alpha|}{2\pi}
\int_{-\infty}^\infty\frac{dt}{|t-w||t-\alpha|}.
\end{equation}
Put $c:=|\Re w|$, then the inequalities
\begin{align*}
  \int_{c+1}^\infty\frac{dt}{|t-w||t-\alpha|}
&\leq 
  \int_{c+1}^\infty\frac{dt}{(t-c)^2}
=
  1, &
  \int_{-\infty}^{-(c+1)}\frac{dt}{|t-w||t-\alpha|}
&\leq 
  \int_{-\infty}^{-c-1}\frac{dt}{(t+c)^2}
=
  1,\\
  \int_{-c-1}^{c+1}\frac{dt}{|t-w||t-\alpha|}
&\leq 
  \frac{2(c+1)}{|\Im w|},
\end{align*}
imply that the right hand side of \eqref{15} is bounded above by
$$
\frac{|w-\alpha|}{\pi}
\left(
1+  \frac{|\Re w|+1}{|\Im w|}
\right).
$$
Hence, $\iota\varphi_X$ is a slowly increasing hyperfunction.
\end{proof}
In order to calculate an explicit form of a defining function of
the Fourier transformation of $\iota\varphi_X$,
we decompose the function $\varphi_X(t)$ as
$$
\varphi_X(t)= (1-H(t))\varphi_X(t) +H(t)\varphi_X(t)
$$
where $H(t)$ is the Heaviside function.
Embedding the both sides of the above equation into
the global section $\hf(\R)$ of the sheaf of hyperfunctions, we have
$$
\iota\varphi_X = \iota((1-H)\varphi_X) + \iota(H \varphi_X)
$$
By Proposition \ref{sato-1}, defining functions of
$\iota((1-H)\varphi_X)$ and $\iota(H \varphi_X)$ are given by 
\begin{align*}
\varphi_1(w)&:=
\frac{1}{2\pi\I}
\int_{-\infty}^0 \varphi_X(t)\left(
  \frac{1}{t-w}-\frac{1}{t-\alpha}
\right)dt
\quad (w\in\C-\R_{\leq 0}),\\
\varphi_2(w)&:=
\frac{1}{2\pi\I}
\int_0^\infty \varphi_X(t)\left(
  \frac{1}{t-w}-\frac{1}{t-\alpha}
\right)dt
\quad (w\in\C-\R_{\geq 0})
\end{align*}
respectively.
Then we have
$ 
\iota\varphi_X = [\varphi_1(w)]+[\varphi_2(w)].
$ 
Analogous to Lemma \ref{1},
we can show that
both $[\varphi_1(w)]$ and $[\varphi_2(w)]$ are
slowly increasing hyperfunctions.

For a function $F(z)$ with a complex variable ,
we denote by $\int_{+\infty}^{(0+)} F(z) dz$
the integral along the path described in Figure \ref{contour-a}.
We also denote by $\int_{-\infty}^{(0+)} F(z) dz$
the integral along the path described in Figure \ref{contour-b}.

\begin{figure}[htbp]
 \begin{minipage}{0.45\hsize}
  \center
  \ifincludefigure
  \includegraphics[width=0.95\hsize]{./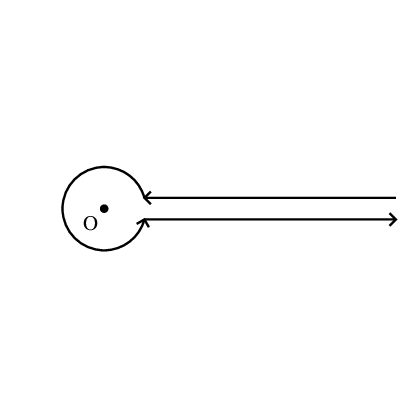}\\
  \fi
  \caption{Contour of the path of $\int_{+\infty}^{(0+)}$}
  \label{contour-a}
 \end{minipage}\hfill 
 \begin{minipage}{0.45\hsize}
  \ifincludefigure
  \includegraphics[width=0.95\hsize]{./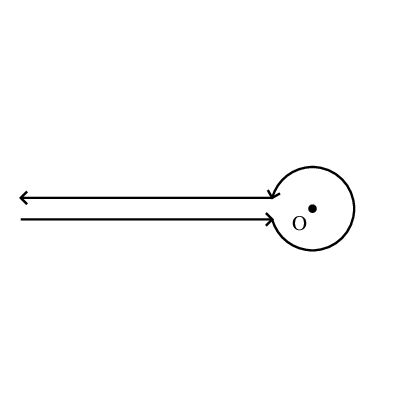}\\
  \fi
  \caption{ Contour of the path of $\int_{-\infty}^{(0+)}$}
  \label{contour-b}
 \end{minipage}
\end{figure}

We denote by $\H$ the upper half-plane $\{z\in\mathbf C\vert \Im z >0\}$.
\begin{lemma}
The holomorphic functions 
\begin{align*}
\psi_1(z) &:=
\begin{cases}
-\int_{-\infty}^{(0+)}e^{-\I zw}\varphi_1(w)dw &(z\in\H),\\
0 &(z\in-\H),
\end{cases}\\
\psi_2(z) &:=
\begin{cases}
0&(z\in\H),\\
\int_{+\infty}^{(0+)}e^{-\I zw}\varphi_2(w)dw & (z\in-\H),
\end{cases}
\end{align*}
are defining functions of the Fourier transformations of
$[\varphi_1(w)]$ and $[\varphi_2(w)]$ respectively, i.e.,
we have
$\mathcal F[\varphi_k(w)] = [\psi_k(z)]\,(k=1,2)$.
\end{lemma}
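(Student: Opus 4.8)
The plan is to unwind the definition of the Fourier transform (Definition \ref{def:Fourier}) applied to the hyperfunctions $[\varphi_1(w)]$ and $[\varphi_2(w)]$, then to collapse the two horizontal contours that occur there onto a single loop encircling the half-line on which the chosen defining function is singular, and finally to recognize that the difference between the resulting integral and $\psi_i(z)$ continues holomorphically across $\R$. I would carry this out for $i=1$; the case $i=2$ is the mirror image, obtained by replacing $\R_{\leq 0}$ by $\R_{\geq 0}$, $\chi_-$ by $\chi_+$, and the contour $\int_{-\infty}^{(0+)}$ of Figure \ref{contour}(b) by the contour $\int_{+\infty}^{(0+)}$ of Figure \ref{contour}(a).

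First I would note that $\varphi_1(w)$, being defined by an integral over $t\in(-\infty,0]$, is holomorphic on the whole of $\C\setminus\R_{\leq 0}$ and is polynomially bounded on each horizontal line off the real axis; in particular $[\varphi_1(w)]$ is slowly increasing (as observed after \eqref{3}), and the integral defining $\psi_1$ converges and is holomorphic on $\H$, because for $\Im z>0$ the factor $e^{-\I zw}$ decays along the tails of the loop. Fixing $y$ with $0<y<\pi$, Definition \ref{def:Fourier} gives $\mathcal F[\varphi_1(w)]=[G(z)]$, where
\[
G(z)=\int_{\Im w=+y}e^{-\I zw}\chi_-(w)\varphi_1(w)\,dw-\int_{\Im w=-y}e^{-\I zw}\chi_-(w)\varphi_1(w)\,dw
\]
for $\Im z>0$, and the same formula with $\chi_+$ in place of $\chi_-$ holds for $\Im z<0$.

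The heart of the proof is a contour deformation. Using that $\chi_\pm$ is holomorphic for $|\Im w|<\pi$, that $\varphi_1$ is holomorphic off $\R_{\leq 0}$, and that $\chi_+(w)$, resp.\ $\chi_-(w)$, decays exponentially as $\Re w\to-\infty$, resp.\ as $\Re w\to+\infty$, while $\varphi_1$ grows only polynomially, I would check that for $0<|\Im z|<1$ the vertical segments at $\Re w=\pm\infty$ joining the two horizontal lines contribute nothing. Hence the difference of the two horizontal integrals equals the integral of the same integrand over the boundary of the strip $\{|\Im w|\leq y\}$, traversed clockwise; since that integrand is holomorphic in the strip away from $\R_{\leq 0}$, the contour may be contracted onto a loop around $\R_{\leq 0}$, and comparison with Figure \ref{contour}(b) gives
\begin{gather*}
G(z)=-\int_{-\infty}^{(0+)}e^{-\I zw}\chi_-(w)\varphi_1(w)\,dw\qquad(0<\Im z<1),\\
G(z)=\int_{-\infty}^{(0+)}e^{-\I zw}\chi_+(w)\varphi_1(w)\,dw\qquad(-1<\Im z<0).
\end{gather*}

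Finally I would invoke $\chi_+(w)+\chi_-(w)=1$. Since $\psi_1(z)=-\int_{-\infty}^{(0+)}e^{-\I zw}\varphi_1(w)\,dw$ on $\H$ and $\psi_1(z)=0$ on $-\H$, a one-line computation gives, in \emph{both} half-planes,
\[
G(z)-\psi_1(z)=\int_{-\infty}^{(0+)}e^{-\I zw}\chi_+(w)\varphi_1(w)\,dw,
\]
so the discrepancy is one and the same contour integral on $\{0<|\Im z|<1\}$. Because $\chi_+(w)$ decays like $e^{\Re w}$ as $\Re w\to-\infty$ while $\varphi_1$ grows at most polynomially there, this integral converges locally uniformly on $\{\Im z>-1\}$ and hence is holomorphic on a neighborhood of $\R$; therefore $G(z)-\psi_1(z)$ extends holomorphically across $\R$, i.e.\ $[G(z)]=[\psi_1(z)]$, which is the assertion $\mathcal F[\varphi_1(w)]=[\psi_1(z)]$, and the mirror computation gives $\mathcal F[\varphi_2(w)]=[\psi_2(z)]$. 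The only genuinely delicate point is the contour surgery of the third paragraph: one must keep careful track of orientations (the clockwise strip boundary versus the Hankel-type loops of Figure \ref{contour}) and establish the growth/decay estimates that simultaneously (i)~justify discarding the segments at $\Re w=\pm\infty$ for $0<|\Im z|<1$ and (ii)~show the leftover integral $\int_{-\infty}^{(0+)}e^{-\I zw}\chi_+(w)\varphi_1(w)\,dw$ to be holomorphic across $\R$ --- point (ii) being exactly the statement that the $\chi_\pm$-regularization built into Definition \ref{def:Fourier} is invisible at the level of hyperfunctions.
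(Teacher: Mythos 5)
Your proposal is correct and follows essentially the same route as the paper: write out Definition \ref{def:Fourier}, use the holomorphy of $\varphi_1$ on $\C\setminus\R_{\leq 0}$ to collapse the two horizontal contours onto the loop $\int_{-\infty}^{(0+)}$, and then use $\chi_++\chi_-=1$ together with the fact that $\int_{-\infty}^{(0+)}e^{-\I zw}\chi_+(w)\varphi_1(w)\,dw$ is holomorphic in a neighborhood of $\R$ to discard it modulo $\mathcal O$. You supply more detail than the paper on the contour surgery and the growth estimates, but the argument is the same.
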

\begin{proof}
By Definition \ref{def:Fourier}, a defining function of $\mathcal F[\varphi_1(w)]$
is given by
$$
\tilde\psi_1(z)=
\begin{cases}
 \int_{\Im w=+y} e^{-\I zw}\chi_-(w)\varphi_1(w) dw
-\int_{\Im w=-y} e^{-\I zw}\chi_-(w)\varphi_1(w) dw
& (\Im z > 0)\\
-\int_{\Im w=+y} e^{-\I zw}\chi_+(w)\varphi_1(w) dw
+\int_{\Im w=-y} e^{-\I zw}\chi_+(w)\varphi_1(w) dw
& (\Im z < 0).
\end{cases}
$$
Since $\varphi_1$ is holomorphic on $\mathbf C-\mathbf R_{\leq 0}$,
we have
$$
\tilde\psi_1(z)=
\begin{cases}
 -\int_{-\infty}^{(0+)}  e^{-\I zw}\chi_-(w)\varphi_1(w) dw
& (\Im z > 0)\\
 \int_{-\infty}^{(0+)}  e^{-\I zw}\chi_+(w)\varphi_1(w) dw
& (\Im z < 0).
\end{cases}
$$
Note that the integral
$ \int_{-\infty}^{(0+)}  e^{-\I zw}\chi_+(w)\varphi_1(w) dw$
defines a holomorphic function on a neighborhood of $\mathbf R$.
Hence,
$$
\psi_1(z)=\tilde\psi_1(z)
-\int_{-\infty}^{(0+)}  e^{-\I zw}\chi_+(w)\varphi_1(w) dw
$$
is a defining function of $\mathcal F[\varphi_1(w)]$ also.

We can show $\mathcal F[\varphi_2(w)]=[\psi_2(z)]$ analogously.
\end{proof}
\begin{lemma}\label{4}
Put
$$
\psi(z) =
\begin{cases}
\int_{-\infty}^0 e^{-\I zt}\varphi_X(t)dt & (z\in\H),\\
-\int_0^\infty e^{-\I zt}\varphi_X(t)dt & (z\in-\H).
\end{cases}
$$
Then $\psi(z)$ is a defining function of $\mathcal F\iota\varphi_X$.
\end{lemma}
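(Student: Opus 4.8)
The plan is to reduce the statement to the preceding lemma by linearity of the Fourier transformation and then to identify the resulting defining function with $\psi$ by a residue computation. By \eqref{3} we have $\iota\varphi_X=[\varphi_1(w)]+[\varphi_2(w)]$, and both summands are slowly increasing (as remarked after \eqref{3}, by the argument of Lemma \ref{1}); hence $\mathcal F\iota\varphi_X=\mathcal F[\varphi_1(w)]+\mathcal F[\varphi_2(w)]=[\psi_1(z)]+[\psi_2(z)]=[\psi_1(z)+\psi_2(z)]$ by the preceding lemma. It therefore suffices to show $\psi_1(z)+\psi_2(z)=\psi(z)$ on $\C\backslash\R$; since $\psi_2$ vanishes on $\H$ and $\psi_1$ vanishes on $-\H$, this reduces to the two identities $\psi_1(z)=\int_{-\infty}^0 e^{-\I zt}\varphi_X(t)\,dt$ for $z\in\H$ and $\psi_2(z)=-\int_0^\infty e^{-\I zt}\varphi_X(t)\,dt$ for $z\in-\H$.

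To prove the first identity I would fix $z\in\H$, substitute into $\psi_1(z)=-\int_{-\infty}^{(0+)}e^{-\I zw}\varphi_1(w)\,dw$ the integral formula defining $\varphi_1$, and interchange the order of the $t$- and $w$-integrations. The term coming from $1/(t-\alpha)$ produces a constant multiple of $\int_{-\infty}^{(0+)}e^{-\I zw}\,dw$, which vanishes: $e^{-\I zw}$ is entire and, for $\Im z>0$, decays as $w\to-\infty$ along the two edges of the contour, so on collapsing the contour toward $\R_{\le 0}$ the two edges cancel and the loop around $0$ contributes nothing. For the term coming from $1/(t-w)$, for each fixed $t<0$ the integrand $w\mapsto e^{-\I zw}/(t-w)$ has a single simple pole at $w=t$, which lies inside the contour $\int_{-\infty}^{(0+)}$, and the residue theorem gives $\int_{-\infty}^{(0+)}e^{-\I zw}/(t-w)\,dw=-2\pi\I\,e^{-\I zt}$; feeding this back recovers $\psi_1(z)=\int_{-\infty}^0 e^{-\I zt}\varphi_X(t)\,dt$. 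The second identity follows in the same way from $\psi_2(z)=\int_{+\infty}^{(0+)}e^{-\I zw}\varphi_2(w)\,dw$, using that for $\Im z<0$ the factor $e^{-\I zw}$ decays as $w\to+\infty$ and that the pole $w=t$ now lies inside $\int_{+\infty}^{(0+)}$ for $t>0$.

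The main obstacle is the justification of the interchange of integrals. The double integral is not absolutely convergent: for $z\in\H$ the quantity $\int_{-\infty}^0\int_{-\infty}^{(0+)}|\varphi_X(t)|\,|t-w|^{-1}|e^{-\I zw}|\,|dw|\,dt$ diverges because $|t-w|^{-1}$ decays too slowly in $t$, so Fubini's theorem does not apply directly. I would handle this either by a truncation argument — restrict the $t$-integration to $[-N,0]$, apply Fubini on the bounded domain, and let $N\to\infty$ using the growth estimates established in Lemma \ref{1} — or, more cleanly, by avoiding the interchange altogether: the function $e^{-\I zw}\varphi_1(w)$ is holomorphic on $\C\backslash\R_{\le 0}$, decays as $w\to-\infty$ for $z\in\H$, and has at worst a logarithmic singularity at the origin, so the contour $\int_{-\infty}^{(0+)}$ may be collapsed onto $\R_{\le 0}$, yielding the integral against $e^{-\I zt}$ of the jump $\varphi_1(t+\I 0)-\varphi_1(t-\I 0)$ of $\varphi_1$ across the negative axis; by the Sokhotski--Plemelj formula this jump equals $\varphi_X(t)$ for $t<0$, which again gives $\psi_1(z)=\int_{-\infty}^0 e^{-\I zt}\varphi_X(t)\,dt$. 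The $\psi_2$ case is identical with $\R_{\ge 0}$ in place of $\R_{\le 0}$ and $\Im z<0$. The residue/jump bookkeeping is routine; the convergence issue is the only real point.
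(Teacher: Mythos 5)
Your overall strategy is the same as the paper's: write $\mathcal F\iota\varphi_X=[\psi_1+\psi_2]$ via the preceding lemma, substitute the integral formula for $\varphi_i$, interchange the $t$- and $w$-integrations, and evaluate the inner $w$-integral by Cauchy's theorem (the $1/(t-\alpha)$ part contributing $0$ and the $1/(t-w)$ part contributing $-2\pi\I e^{-\I zt}$ from the encircled pole at $w=t$). Your residue bookkeeping agrees with the paper's computation.

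Where you diverge is on the one technical point you flag as ``the only real point,'' and there your assessment is mistaken. You conclude that Fubini fails because $\int|\varphi_X(t)|\,|t-w|^{-1}\,dt$ diverges; but that is the absolute integral of one term of the kernel taken in isolation. The paper applies Fubini--Tonelli to the \emph{combined} integrand, whose kernel is
$$
\frac{1}{t-w}-\frac{1}{t-\alpha}=\frac{w-\alpha}{(t-w)(t-\alpha)},
$$
which decays like $|t|^{-2}$ as $t\to\pm\infty$ and is bounded for $t$ on the half-line since the contour stays a fixed distance from it; together with the exponential decay of $e^{-\I zw}$ along the contour (for $z$ in the appropriate half-plane), this makes the double integral absolutely convergent, and the interchange is immediate. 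Your proposed workarounds (truncation in $t$, or collapsing the contour and invoking Sokhotski--Plemelj) are therefore unnecessary detours, and the truncation route in particular would force you to keep the two kernel terms together anyway, since the separate integrals $\int_{-\infty}^0\varphi_X(t)(t-w)^{-1}dt$ and $\int_{-\infty}^0\varphi_X(t)(t-\alpha)^{-1}dt$ need not converge even conditionally ($\varphi_X$ is merely bounded). So the proof goes through, but the difficulty you built the second half of your argument around is not actually present once the kernel is kept intact.
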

\begin{proof}
It is enough to calculate the right hand side of
the equation $\mathcal F\iota\varphi_X=[\psi_1+\psi_2]$.

For $z\in-\H$, we have
\begin{align*}
\psi_2(z)
&=\int_{+\infty}^{(0+)}e^{-\I zw}
\frac{1}{2\pi\I}
\int_0^\infty \varphi_X(t)\left(
  \frac{1}{t-w}-\frac{1}{t-\alpha}
\right)dt
dw\\
&=\frac{1}{2\pi\I}\int_{+\infty}^{(0+)}\int_0^\infty
  e^{-\I zw}\varphi_X(t)\left(
  \frac{1}{t-w}-\frac{1}{t-\alpha}
\right)dt dw
\end{align*}
By the assumption $z\in-\H$, we have
$$
\int_{+\infty}^{(0+)}\int_0^\infty\left|
  e^{-\I zw}\varphi_X(t)\left(
  \frac{1}{t-w}-\frac{1}{t-\alpha}
\right)\right| dt dw 
<\infty.
$$ 
By the Fubini-Tonelli theorem and Cauchy's integral formula,
$\psi_2(z)$ equals to 
\begin{align*}
&
\frac{1}{2\pi\I}\int_0^\infty\int_{+\infty}^{(0+)}
  e^{-\I zw}\varphi_X(t)\left(
  \frac{1}{t-w}-\frac{1}{t-\alpha}
\right)dw dt \\
&=
-\frac{1}{2\pi\I}\int_0^\infty\varphi_X(t)\int_{+\infty}^{(0+)}
  e^{-\I zw}\left(
  \frac{1}{w-t}+\frac{1}{t-\alpha}
\right)dw dt 
=
-\int_0^\infty
  e^{-\I zt}\varphi_X(t)
  dt.
\end{align*}
For $z\in\H$,  we can show
$\psi_1(z)=\int_{-\infty}^0 e^{-\I zt}\varphi_X(t)dt$
analogously.
\end{proof}
\begin{lemma}\label{21}
For $z\in\mathbf C\backslash\mathbf R$,
the following equation holds:
\begin{equation}\label{18}
\psi(z)=
\frac{1}{i}\int_{\mathbf R}\frac{1}{x-z}\mu_X(dx).
\end{equation}
\end{lemma}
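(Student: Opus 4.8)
The plan is to prove \eqref{18} by a direct computation: substitute the integral representation $\varphi_X(t) = \E{e^{\I tX}} = \int_{\R} e^{\I tx}\,\mu_X(dx)$ into the formula for $\psi(z)$ given in Lemma \ref{4}, interchange the order of integration, and evaluate the resulting elementary one--variable integral. Concretely, for $z \in \H$ one writes
\[
\psi(z) = \int_{-\infty}^0 e^{-\I zt}\varphi_X(t)\,dt
        = \int_{-\infty}^0 \int_{\R} e^{-\I zt}e^{\I tx}\,\mu_X(dx)\,dt,
\]
and similarly for $z \in -\H$ with $-\int_0^\infty$ in place of $\int_{-\infty}^0$.

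First I would justify the interchange by the Fubini--Tonelli theorem. Since $x$ is real, $|e^{\I tx}| = 1$ and $|e^{-\I zt}| = e^{t\,\Im z}$; on the region $t \le 0$ this exponent is nonpositive precisely because $\Im z > 0$, so
\[
\int_{-\infty}^0 \int_{\R} \left| e^{-\I zt}e^{\I tx} \right|\,\mu_X(dx)\,dt
  = \int_{-\infty}^0 e^{t\,\Im z}\,dt = \frac{1}{\Im z} < \infty,
\]
using $\mu_X(\R) = 1$; in the lower half--plane case the analogous bound is $1/|\Im z|$. Hence the double integral is absolutely convergent and the order of integration may be switched.

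After switching, the inner integral is $\int_{-\infty}^0 e^{\I t(x-z)}\,dt$ (resp. $\int_0^\infty e^{\I t(x-z)}\,dt$). For $z \in \H$ and real $x$ we have $\Im(x-z) < 0$, so $\left| e^{\I t(x-z)} \right| = e^{-t\,\Im(x-z)} \to 0$ as $t \to -\infty$, and evaluating the antiderivative $e^{\I t(x-z)}/(\I(x-z))$ between $-\infty$ and $0$ yields $1/(\I(x-z))$. For $z \in -\H$ one has $\Im(x-z) > 0$, the integral $\int_0^\infty$ evaluates to $-1/(\I(x-z))$, and the extra minus sign in the definition of $\psi$ on $-\H$ restores the same value. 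In both cases
\[
\psi(z) = \int_{\R}\frac{1}{\I(x-z)}\,\mu_X(dx) = \frac{1}{i}\int_{\R}\frac{1}{x-z}\,\mu_X(dx),
\]
which is \eqref{18}. There is no substantial obstacle; the only point needing care is tracking the sign of $\Im z$ --- and hence of $\Im(x-z)$ --- so that both the Fubini estimate and the vanishing of the boundary term at $\pm\infty$ hold, which is exactly the role played by the case distinction in the definition of $\psi$.
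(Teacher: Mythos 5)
Your proposal is correct and follows essentially the same route as the paper: substitute $\varphi_X(t)=\int_{\R}e^{\I tx}\mu_X(dx)$ into the formula for $\psi$, justify the interchange of integrals by Fubini--Tonelli using the bound $\int e^{t\Im z}dt<\infty$ on the appropriate half-line, and evaluate the resulting elementary integral to get $1/(\I(x-z))$. The only difference is cosmetic --- the paper writes out the $z\in-\H$ case and declares the other analogous, while you detail the $z\in\H$ case --- and your sign bookkeeping is accurate throughout.
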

\begin{proof}
For $z\in-\H$, we have
\begin{equation}\label{16}
-\int_0^\infty e^{-\I zt}\varphi_X(t)dt
=
-\int_0^\infty\int_{\R} e^{-\I(z-x)t}\mu_X(dx)dt
\end{equation}
by the definition of the characteristic function.
Since $\Im z$ is negative, we have
\begin{align*}
\int_0^\infty\int_{\R} \left|e^{-\I(z-x)t}\right|\mu_X(dx)dt
&=
\int_0^\infty e^{\Im z t}dt
\leq
-\frac{1}{\Im z}
<\infty.
\end{align*}
By the Fubini-Tonelli theorem, the right hand side of \eqref{16}
equals to 
\begin{align*}
-\int_{\R}\int_0^\infty e^{-\I(z-x)t}dt\mu_X(dx)
&=
\int_{\R}\frac{1}{-\I(z-x)}\mu_X(dx)
=
\frac{1}{\I}\int_{\R}\frac{1}{x-z}\mu_X(dx).
\end{align*}
For $z\in\H$,  we can show the equation \eqref{18} analogously.
\end{proof}

\begin{theorem}\label{5}
For a random variable $X$, equation \eqref{2} holds.
\end{theorem}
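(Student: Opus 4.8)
The plan is to assemble the pieces that the preceding lemmas have already prepared; almost nothing is left to do. By Lemma~\ref{4}, the piecewise holomorphic function $\psi(z)$ on $\mathbf C\backslash\mathbf R$ is a defining function of $\mathcal F\iota\varphi_X$, so that $\mathcal F\iota\varphi_X=[\psi(z)]$ as an element of $\mathcal B(\mathbf R)$. By Lemma~\ref{21}, this same function satisfies, for every $z\in\mathbf C\backslash\mathbf R$,
\begin{align*}
\psi(z)
&=\frac{1}{i}\int_{\mathbf R}\frac{1}{x-z}\mu_X(dx)
=2\pi\cdot\frac{1}{2\pi i}\int_{\mathbf R}\frac{1}{x-z}\mu_X(dx).
\end{align*}

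Next I would invoke \eqref{20}, which asserts that the bracket $\left[\frac{1}{2\pi i}\int_{\mathbf R}\frac{1}{x-z}\mu_X(dx)\right]$ is exactly $\iota\mu_X$. The class $[\,\cdot\,]$ lives in the quotient $\varinjlim_{U\supset\mathbf R}\mathcal O(U\backslash\mathbf R)/\mathcal O(U)$, hence depends $\mathbf C$-linearly on the representative, so the scalar $2\pi$ may be pulled through the bracket. Combining the two displays,
\begin{align*}
\mathcal F\iota\varphi_X
&=[\psi(z)]
=2\pi\left[\frac{1}{2\pi i}\int_{\mathbf R}\frac{1}{x-z}\mu_X(dx)\right]
=2\pi\,\iota\mu_X,
\end{align*}
which is precisely \eqref{2}.

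I do not expect a genuine obstacle at this final step: the analytic content has already been spent, in the slow increase of $\iota\varphi_X$ (Lemma~\ref{1}), the contour-deformation evaluation of its Fourier transform (Lemma~\ref{4}), and the Fubini--Cauchy computation identifying $\psi$ with the Cauchy transform of $\mu_X$ (Lemma~\ref{21}). The only points deserving a line of care are that $\psi$ is an admissible representative in the sense of Definition~\ref{def:Fourier} (being holomorphic off $\mathbf R$ and slowly increasing) and that multiplication of a defining function by the constant $2\pi$ corresponds to multiplication of the underlying hyperfunction by $2\pi$; both are immediate from the setup of Section~\ref{sec:hyperfunction}.
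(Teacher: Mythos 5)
Your proposal is correct and follows exactly the paper's own argument: combine Lemma~\ref{4} (giving $\mathcal F\iota\varphi_X=[\psi]$), Lemma~\ref{21} (identifying $\psi$ with $2\pi$ times the Cauchy transform of $\mu_X$), and equation~\eqref{20}. The extra remarks on linearity of the bracket are fine but not needed beyond what the paper already assumes.
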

\begin{proof}
By Lemma \ref{21} and equation \eqref{20}, we have
\begin{align*}
\mathcal F\iota\varphi_X
&=[\psi(z)]
=\left[\frac{1}{\I}\int_{\mathbf R}\frac{1}{x-z}\mu_X(dx)\right]
=2\pi\iota\mu_X.
\end{align*}
\end{proof}

\begin{proof}[Other Proof of Proposition \ref{a7Jun2018}]
Let $X$ be a random variable whose probability density function is $f(x)$.
Then, $\varphi(t)$ equals to the characteristic function of $X$.
By Lemma \ref{1} and Lemma \ref{4},
the hyperfunction $\iota\varphi$ corresponding to $\varphi(t)$
is a Fourier hyperfunction,
and a defining function of $\mathcal F\iota \varphi$ is
$$
\psi(z) :=
\begin{cases}
\int_{-\infty}^0 e^{-\I zt}\varphi(t) dt & (z\in\H),\\
-\int_0^\infty e^{-\I zt}\varphi(t) dt & (z\in-\H).
\end{cases}
$$
By Theorem \ref{5}, the corresponding hyperfunction $\iota f$ of $f$
equals to 
$(2\pi)^{-1}\mathcal F\iota\varphi$.
For any continuous point $x$ of $f$, 
the boundary-value representation \cite[Theorem 1.3.12]{kaneko1988} implies
$$
f(x) =
\frac{1}{2\pi}
\lim_{y\rightarrow +0}\left(
\int_{-\infty}^0 e^{-\I (x+\I y)t}\varphi(t) dt 
+\int_0^\infty e^{-\I (x-\I y)t}\varphi(t) dt 
\right)
$$
\end{proof}

\section{Sum of $r$-th Power of the Normal Variables}\label{PowerSum}
Fix a positive integer $r\geq 3$.
Let $X_1,\dots, X_n$ be 
the independent, identically and normally distributed random variables, 
and put $X:=\sum_{k=1}^n X_k^r$.
Let $\varphi(t)$ be the characteristic function of $X_1^r$, i.e.,
put
\begin{equation}\label{6}
\varphi(t) = \E{e^{\I t X_1^r}}
=\int_{\R} e^{\I tx^r}\frac{1}{\sqrt{2\pi}}e^{-\frac{1}{2}x^2}dx.
\end{equation}
Then, the characteristic function $\varphi_X$
of the powered sum $X$ equals to $\varphi(t)^n$.
By Proposition \ref{a7Jun2018}, we have
\begin{equation}\label{14}
f_X(x) =
\frac{1}{2\pi}
\lim_{y\rightarrow +0}\left(
\int_{-\infty}^0 e^{-\I (x+\I y)t}\varphi(t)^n dt 
+\int_0^\infty e^{-\I (x-\I y)t}\varphi(t)^n dt 
\right)
\end{equation}
for any continuous point $x\in\R$ of $f_X$.

In order to calculate the limit in the right hand side of \eqref{14},
we discuss on the analytic continuations of
the each term of \eqref{19}.

In the first, we consider the characteristic function in \eqref{6}.
The characteristic function can be decomposed as
$$
\varphi(t) 
=\frac{1}{\sqrt{2\pi}}\int_0^\infty\exp\left(\I tx^r-\frac{1}{2}x^2\right)dx
+\frac{1}{\sqrt{2\pi}}\int_{-\infty}^0\exp\left(\I tx^r-\frac{1}{2}x^2\right)dx.
$$
We are interested in the analytic continuation of each term
in the right hand side.
In order to calculate them, 
we consider the following general form of integral:
\begin{equation}\label{7}
\varphi_\theta(w)
:=\frac{1}{\sqrt{2\pi}}\int_{\gamma_\theta}
  \exp\left(\I wz^r-\frac{1}{2}z^2\right)dz
\quad (w\in\C),
\end{equation}
where the integral path $\gamma_\theta$ with parameter $\theta\in\R$ is defined by
\begin{equation}\label{8}
\gamma_\theta(t) := te^{\I\theta}
\quad (0<t<\infty).
\end{equation}
Note that we have
$\varphi(t)=\varphi_0(t)-\varphi_\pi(t)$ for $t\in\R$.

For $z\in\mathbf C$, put
$$
z\H :=\left\{zw \vert w\in\H\right\}.
$$
Examining the convergence region of the integral \eqref{7}
for given parameter $\theta\in\R$,
we obtain the following lemma:
\begin{lemma}
Suppose that $d\geq 3$.
For given parameter $\theta\in\R$, the integral \eqref{7} converges on
$e^{-\I\theta r}\H$.
\end{lemma}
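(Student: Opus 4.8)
The plan is to pull the contour integral back to the real parameter $t$ along $\gamma_\theta(t)=te^{\I\theta}$ and then read off absolute convergence directly from the real part of the exponent. Writing $z=te^{\I\theta}$, so that $dz=e^{\I\theta}\,dt$, $z^d=t^de^{\I\theta d}$ and $z^2=t^2e^{2\I\theta}$, one gets
\begin{equation*}
\varphi_\theta(w)=\frac{e^{\I\theta}}{\sqrt{2\pi}}\int_0^\infty
\exp\!\left(\I w t^d e^{\I\theta d}-\tfrac12 t^2 e^{2\I\theta}\right)dt .
\end{equation*}
Since $\Re(\I\zeta)=-\Im\zeta$, the modulus of the integrand equals
$\exp\!\left(-t^d\,\Im\!\big(we^{\I\theta d}\big)-\tfrac12 t^2\cos 2\theta\right)$, so everything reduces to controlling this real exponent as $t\to\infty$.

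First I would observe that, by the definition of $z\H$, the condition $w\in e^{-\I\theta d}\H$ is exactly the condition $\Im(we^{\I\theta d})>0$. So fix such a $w$ and set $a:=\Im(we^{\I\theta d})>0$. Because $d\geq 3>2$, the term $-at^d$ dominates the term $-\tfrac12 t^2\cos 2\theta$ as $t\to\infty$ regardless of the sign of $\cos 2\theta$; concretely there is a $T>0$, depending only on $a$ and $\theta$, such that $-at^d-\tfrac12 t^2\cos 2\theta\le -\tfrac{a}{2}t^d$ for all $t\ge T$.

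Then I would split $\int_0^\infty=\int_0^T+\int_T^\infty$. On $[0,T]$ the integrand is continuous, hence bounded, so that contribution is finite; on $[T,\infty)$ the modulus of the integrand is bounded by $e^{-\frac{a}{2}t^d}$, which is integrable since $d\ge 1$. Hence the integral defining $\varphi_\theta(w)$ converges absolutely for every $w\in e^{-\I\theta d}\H$, which is the assertion. I do not expect a genuine obstacle here; the only subtlety is that the possibly negative coefficient $\tfrac12\cos 2\theta$ of $t^2$ is harmless precisely because the hypothesis $d\ge 3$ forces the $t^d$ term to dominate (the case $d=2$ would genuinely be different, which is consistent with the paper assuming $d\ge 3$), and it is worth recording that the same computation shows the region $e^{-\I\theta d}\H$ is sharp: for $w$ with $\Im(we^{\I\theta d})<0$ the integrand grows like $e^{+|a|t^d}$ and the integral diverges.
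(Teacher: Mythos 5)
Your proof is correct and follows essentially the same route as the paper: both reduce to the real-parameter integral along $\gamma_\theta$, identify the modulus of the integrand as $\exp\left(-\Im(we^{\I\theta d})\,t^d-\tfrac{1}{2}t^2\cos 2\theta\right)$, and use $d\geq 3$ to make the $t^d$ term dominate the possibly negative $t^2$ term on a tail $[T,\infty)$, bounding that tail by an integrable exponential. The paper phrases the domination step by factoring out $t^d$ and choosing $t_0,\varepsilon$ with $-\Im(we^{\I\theta d})-\tfrac{\cos 2\theta}{2t^{d-2}}<-\varepsilon$ for $t>t_0$, which is the same estimate as your choice of $T$ with tail bound $e^{-\frac{a}{2}t^d}$.
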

\begin{proof}
Take any point $w\in e^{-\I\theta r}\H$.
By the straight forward calculation, we have
\begin{align*}
\left|\varphi_\theta(w)\right|
&\leq \frac{1}{\sqrt{2\pi}}
\int_0^\infty\left|
  \exp\left(\I we^{\I\theta r}t^r-\frac{1}{2}e^{2\I\theta}t^2\right)e^{\I\theta}
\right|dt\\
&= \frac{1}{\sqrt{2\pi}}
\int_0^\infty
  \exp\left(
     -\Im(w e^{\I\theta r}) t^r 
    -\frac{\cos(2\theta)}{2}t^2
  \right)
dt
\end{align*}
Since assumption $we^{\I\theta r}\in\H$ implies $\Im(we^{\I\theta r})>0$,
there exist $t_0>1$ and $\varepsilon>0$ such that
$$
-\Im(we^{\I\theta r}) - \frac{\cos(2\theta)}{2t^{r-2}}
< -\varepsilon
$$
holds for any $t>t_0$.
Hence, we have
\begin{align*}
&
\int_{t_0}^\infty
  \exp\left(
    -\Im(we^{\I\theta r})t^r
    -\frac{\cos(2\theta)}{2}t^2
  \right)
dt\\
&\leq 
\int_{t_0}^\infty
  \exp\left(
    -\varepsilon t^r
  \right)
dt
\leq
\int_{t_0}^\infty
  r\cdot t^{r-1}
  \exp\left(
    -\varepsilon t^r
  \right)
dt
=
\frac{\exp(-\varepsilon t_0)}{\varepsilon}.
\end{align*}
Consequently, the integral \eqref{7} converges on $e^{-\I\theta r}\H$.
\end{proof}

\begin{lemma}
Suppose that $d\geq 3$.
If $\cos 2\theta>0$ holds, then the integral \eqref{7} converges on
$e^{-\I\theta r}\bar\H$.
Here, $\bar\H$ denotes the closure of $\H$.
\end{lemma}
\begin{proof}
For any point $w\in e^{-\I\theta r}\bar\H$,
we have $\Im(we^{\I\theta r})\geq 0$,
This implies
\begin{align*}
\left|\varphi_\theta(w)\right|
&\leq \frac{1}{\sqrt{2\pi}}
\int_0^\infty
  \exp\left(
    -\Im(w e^{\I\theta r}) t^r 
    -\frac{\cos(2\theta)}{2}t^2
  \right)
dt\\
&\leq \frac{1}{\sqrt{2\pi}}
\int_0^\infty
  \exp\left(
    -\frac{\cos(2\theta)}{2}t^2
  \right)
dt
<\infty
\end{align*}
\end{proof}

\begin{example}
Let $r=3$.
Figures \ref{fig1-a} and \ref{fig1-b} show
the contour of the integral path $\gamma_\theta$
and the region $\exp\(-\I\theta r\)\H$
for $\theta=0$ respectively.
Figures \ref{fig2-a} and \ref{fig2-b} show
them for $\theta=\pi/9$.
\begin{figure}[h]
 \begin{minipage}{0.45\hsize}
  \ifincludefigure
   \center
   \includegraphics[width=0.95\hsize]{./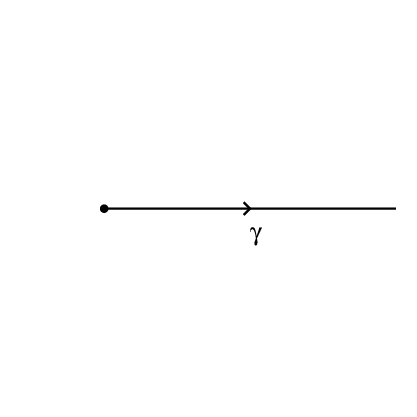}
   \caption{The integral path for $\theta=0$}
   \label{fig1-a}
  \fi
 \end{minipage}
 \begin{minipage}{0.45\hsize}
  \center
  \ifincludefigure
  \includegraphics[width=0.95\hsize]{./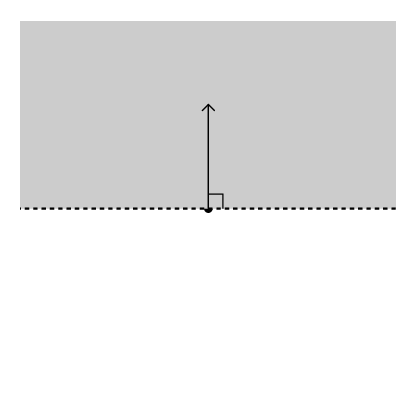}
  \fi
  \caption{The convergence region for $\theta=0$}
  \label{fig1-b}
 \end{minipage}
\end{figure}
\begin{figure}
 \begin{minipage}{0.45\hsize}
  \center
  \ifincludefigure
   \includegraphics[width=0.95\hsize]{./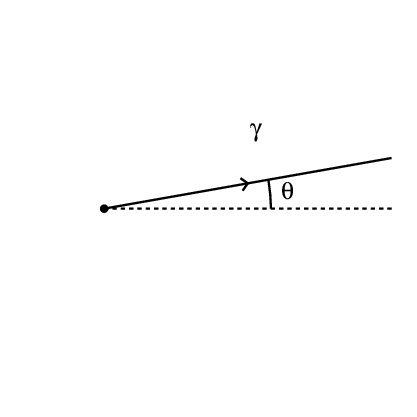}
  \fi
  \caption{The integral path for $\theta=\pi/9$}
  \label{fig2-a}
 \end{minipage}
 \begin{minipage}{0.45\hsize}
  \center
  \ifincludefigure
   \includegraphics[width=0.95\hsize]{./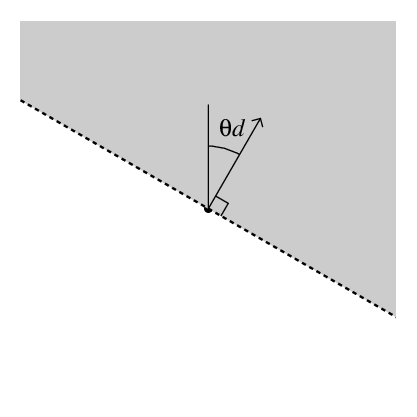}
  \fi
 \caption{The convergence region for $\theta=\pi/9$}
 \label{fig2-b}
 \end{minipage}
\end{figure}
\end{example}

\begin{lemma}
If $|\theta-\theta'|<\pi/d$, then
$\varphi_\theta(w) = \varphi_{\theta'}(w)$
holds for $w\in e^{-\I\theta r}\H\cap e^{-\I\theta'r}\H$.
\end{lemma}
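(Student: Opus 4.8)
The plan is to establish the identity by a contour‑rotation argument based on Cauchy's theorem, in the same spirit as the classical proof that Fresnel‑type integrals are independent of the direction of the ray of integration. The integrand $g(z):=\exp\bigl(\I wz^d-\tfrac12 z^2\bigr)$ is an entire function of $z$ for each fixed $w$. I would fix $w\in e^{-\I\theta d}\H\cap e^{-\I\theta'd}\H$, assume without loss of generality that $\theta<\theta'$ (the case $\theta=\theta'$ is trivial and $\theta>\theta'$ follows by symmetry), and apply Cauchy's theorem to the closed contour consisting of the segment of $\gamma_\theta$ from $0$ to $Re^{\I\theta}$, the circular arc $\phi\mapsto Re^{\I\phi}$ for $\theta\le\phi\le\theta'$, and the segment of $\gamma_{\theta'}$ traversed backwards from $Re^{\I\theta'}$ to $0$. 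This gives that the difference of the two truncated ray integrals equals the arc integral. Letting $R\to\infty$ and using that $w$ lies in the convergence region of both $\varphi_\theta$ and $\varphi_{\theta'}$ (by the two preceding lemmas), the truncated ray integrals converge to $\sqrt{2\pi}\,\varphi_\theta(w)$ and $\sqrt{2\pi}\,\varphi_{\theta'}(w)$, so the whole statement reduces to showing that the arc integral tends to $0$.

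The crux, and the step I expect to be the main obstacle, is the uniform estimate of $|g|$ on the arc. On $z=Re^{\I\phi}$ one has $|g(z)|=\exp\bigl(-\Im(we^{\I d\phi})\,R^d-\tfrac12 R^2\cos 2\phi\bigr)$, and the danger is that $\Im(we^{\I d\phi})$ could vanish or become negative for some $\phi$ strictly between $\theta$ and $\theta'$, in which case the arc integral would not decay. I would rule this out using the hypothesis $|\theta-\theta'|<\pi/d$: writing $w=|w|e^{\I\beta}$, the assumptions $we^{\I\theta d},we^{\I\theta'd}\in\H$ say precisely that $\sin(\beta+\theta d)>0$ and $\sin(\beta+\theta'd)>0$, so $\beta+\theta d$ and $\beta+\theta'd$ each lie strictly inside some interval of the form $(2k\pi,(2k+1)\pi)$. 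Since $|\theta d-\theta'd|<\pi$, these two intervals must coincide; and because $\phi\mapsto\beta+d\phi$ is affine and increasing, the whole segment $\{\beta+d\phi:\theta\le\phi\le\theta'\}$ stays strictly inside that one interval. Hence $\phi\mapsto\Im(we^{\I d\phi})=|w|\sin(\beta+d\phi)$ is continuous and strictly positive on the compact interval $[\theta,\theta']$, so it is bounded below there by some $\delta>0$.

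With this observation the remaining estimate is routine. Using $\cos 2\phi\ge -1$ I get $|g(z)|\le\exp\bigl(-\delta R^d+\tfrac12 R^2\bigr)$ uniformly on the arc, whose length is $R|\theta'-\theta|$, so the arc integral is bounded in absolute value by $R|\theta'-\theta|\exp\bigl(-\delta R^d+\tfrac12 R^2\bigr)$; since $d\ge 3>2$, this tends to $0$ as $R\to\infty$. (No special care is needed near $z=0$, since $g$ is bounded there, so the truncated ray integrals are genuinely the partial integrals defining $\varphi_\theta(w)$ and $\varphi_{\theta'}(w)$.) Combining this with Cauchy's theorem and passing to the limit yields $\varphi_\theta(w)=\varphi_{\theta'}(w)$, as claimed.
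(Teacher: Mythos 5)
Your proof is correct and follows essentially the same route as the paper: apply Cauchy's theorem to the closed contour formed by the two truncated rays and the circular arc of radius $R$, then show the arc integral vanishes as $R\to\infty$. In fact you supply the one detail the paper dismisses as ``by some calculations,'' namely that the hypothesis $|\theta-\theta'|<\pi/d$ forces $\Im(we^{\I d\phi})$ to stay strictly positive (hence bounded below) along the entire arc, which together with $d\ge 3>2$ gives the required decay.
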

\begin{proof}
Put
\begin{align*}
\gamma_{1,R}(t) &:= te^{\I\theta} \,(0 \leq t \leq R), &
\gamma_{2,R}(t) &:= te^{\I\theta'} \,(0 \leq t \leq R),&
\gamma_{3,R}(\theta'') &:= Re^{\I\theta''} \,(\theta \leq \theta'' \leq \theta').
\end{align*}
By the Cauchy's integral formula, we have
\begin{equation}\label{13}
\int_{\gamma_{1,R}} \Phi dz
-\int_{\gamma_{2,R}} \Phi dz
+\int_{\gamma_{3,R}} \Phi dz
=0.
\end{equation}
Here, we put
$
\Phi = (2\pi)^{-1/2} \exp\left(\I wz^r-z^2/2\right).
$
By some calculations, we have that there exist $M>0$ and $\varepsilon>0$
such that 
$$
\left| \int_{\gamma_{3,R}} \Phi dz \right|
\leq M R\exp\(-\varepsilon R^r\).
$$
Taking the limit of \eqref{13} as $R$ approaches inifinity,
we have
$
\varphi_\theta(z) - \varphi_{\theta'}(z) =0.
$
\end{proof}

In the second, we disscuss on the analytic continuation
of the each term of \eqref{19}.
Let us consider the following integral:
\begin{equation}\label{17}
\psi(z;\theta,\theta',\phi):=
\int_{\gamma_\phi}
 e^{-\I w z}\left(\varphi_\theta(w)-\varphi_{\theta'}(w)\right)^n
dw.
\end{equation}
where $\theta,\theta',\phi\in\R$, $z\in\C$, and 
the integral path $\gamma_\phi$ is defined by \eqref{8}.
\begin{lemma}\label{d8Jun2018}
Suppose $\Im(e^{\I(\phi+\theta r)})\geq 0$ and $\Im(e^{\I(\phi+\theta'r)})\geq 0$.
Then the integral \eqref{17} converges for $z\in -\exp(-\I\phi)H$.
\end{lemma}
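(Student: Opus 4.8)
The plan is to parametrize $\gamma_\phi$ by $t\mapsto te^{\I\phi}$, $0<t<\infty$, so that
\begin{align*}
\psi(z;\theta,\theta',\phi)
= e^{\I\phi}\int_0^\infty e^{-\I\, t e^{\I\phi} z}\,\bigl(\varphi_\theta(te^{\I\phi})-\varphi_{\theta'}(te^{\I\phi})\bigr)^n\,dt ,
\end{align*}
and then to prove absolute convergence of this integral by showing that the exponential factor decays exponentially in $t$ while the remaining factor stays bounded in $t$. Once that is done, the integrand is dominated by a constant times $e^{-\delta t}$ for some $\delta>0$, which is integrable on $(0,\infty)$, and the lemma follows.

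For the exponential factor, since $z\in-e^{-\I\phi}\H$ I would write $z=-e^{-\I\phi}u$ with $u\in\H$, so that $t e^{\I\phi} z=-tu$ and
\begin{align*}
\bigl|e^{-\I\, t e^{\I\phi} z}\bigr|=\bigl|e^{\I t u}\bigr|=e^{-t\,\Im u}=e^{-\delta t},\qquad \delta:=\Im u>0 .
\end{align*}
For the remaining factor, I would use the hypotheses to place the whole ray $\gamma_\phi$ inside the closed sectors where $\varphi_\theta$ and $\varphi_{\theta'}$ are controlled: since $\Im(e^{\I(\phi+\theta d)})\ge 0$, for every $t>0$ one has $\Im\bigl((te^{\I\phi})e^{\I\theta d}\bigr)=t\,\Im(e^{\I(\phi+\theta d)})\ge 0$, i.e.\ $te^{\I\phi}\in e^{-\I\theta d}\bar\H$, and likewise $te^{\I\phi}\in e^{-\I\theta' d}\bar\H$. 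Feeding this into the estimate obtained in the proof of the preceding lemma on convergence of $\varphi_\theta$ on $e^{-\I\theta d}\bar\H$,
\begin{align*}
\bigl|\varphi_\theta(te^{\I\phi})\bigr|
\le\frac{1}{\sqrt{2\pi}}\int_0^\infty\exp\!\Bigl(-\Im\bigl((te^{\I\phi})e^{\I\theta d}\bigr)s^d-\tfrac{\cos 2\theta}{2}\,s^2\Bigr)ds
\le\frac{1}{\sqrt{2\pi}}\int_0^\infty e^{-\frac{\cos 2\theta}{2}s^2}\,ds ,
\end{align*}
a finite constant independent of $t$, and the same bound holds for $\varphi_{\theta'}$. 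Hence $\bigl|\varphi_\theta(te^{\I\phi})-\varphi_{\theta'}(te^{\I\phi})\bigr|^n$ is bounded uniformly in $t>0$, and combined with the exponential decay above this gives $\int_0^\infty\bigl|e^{-\I\, t e^{\I\phi} z}\bigr|\,\bigl|\varphi_\theta-\varphi_{\theta'}\bigr|^n\,dt<\infty$.

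The point that needs care is exactly this uniform boundedness of $\varphi_\theta$ along $\gamma_\phi$: the constant on the right above is finite only when $\cos 2\theta>0$ (resp.\ $\cos 2\theta'>0$), which is precisely the hypothesis under which the earlier lemma produces convergence on the \emph{closed} sector $e^{-\I\theta d}\bar\H$; in the boundary case $\Im(e^{\I(\phi+\theta d)})=0$ the $s^d$-term disappears and one genuinely relies on $\cos 2\theta>0$. Since in the intended application $\varphi=\varphi_0-\varphi_\pi$, so $\theta,\theta'\in\{0,\pi\}$ with $\cos 2\theta=\cos 2\theta'=1$, I would either add $\cos 2\theta,\cos 2\theta'>0$ to the statement or restrict attention to those $\theta$. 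The one other place where hypotheses are genuinely used is the strictness $z\in-e^{-\I\phi}\H$ (open half-plane): it is what makes $\delta=\Im u>0$, and on the boundary ray $\Im(e^{\I\phi}z)=0$ there is no decay and the integral need not converge.
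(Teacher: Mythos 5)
Your proof is correct, but it is organized quite differently from the paper's. The paper expands $(\varphi_\theta-\varphi_{\theta'})^n$ by the binomial theorem, substitutes the defining integrals of $\varphi_\theta$ and $\varphi_{\theta'}$ to rewrite each term as an $(n+1)$-fold integral over $s\in(0,\infty)$ and $t\in\R_{>0}^n$, applies Fubini--Tonelli, integrates out $s$ explicitly to get a factor $1/\Re h_1(z,t)\le 1/\Re(\I z e^{\I\phi})$, and is then left with the product of Gaussian integrals $\int_0^\infty e^{-\frac{1}{2}\cos(2\theta)t^2}dt$. You instead dominate the integrand pointwise along the ray: the hypotheses place $\gamma_\phi$ in the closed sectors $e^{-\I\theta d}\bar\H$ and $e^{-\I\theta'd}\bar\H$, so the uniform bound from the preceding closed-half-plane lemma controls $|\varphi_\theta-\varphi_{\theta'}|^n$ by a constant, while $z$ lying in the \emph{open} half-plane $-e^{-\I\phi}\H$ gives the exponential decay $e^{-\delta t}$. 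Your route is shorter, avoids the binomial expansion and the interchange of integrals entirely, and makes the logical dependence on the previous lemma explicit; the paper's route produces the intermediate identity \eqref{11}, though that identity is not reused later. Both arguments ultimately rest on the same two facts ($\Re(\I ze^{\I\phi})>0$ and the convergence of the Gaussian integrals), and both share the defect you flag: the final constants are finite only when $\cos 2\theta>0$ and $\cos 2\theta'>0$ (the paper's bound $\left(\pi/(2\cos\theta)\right)^{k/2}\left(\pi/(2\cos\theta')\right)^{(n-k)/2}$ has exactly the same implicit requirement), a hypothesis missing from the statement but satisfied in every instance used in the main theorem, where $\theta,\theta'$ are within $\varepsilon$ of $0$ or $\pi$. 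Your observation that strictness of $z\in-e^{-\I\phi}\H$ is what drives the convergence is likewise consistent with the paper, whose bound degenerates as $\Re(\I ze^{\I\phi})\to 0$.
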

\begin{proof}
By the binomial expansion, the integral \eqref{17} equals to
$$
\sum_{k=0}^n\binom{n}{k}(-1)^{n-k}
\int_{\gamma_\phi} 
  e^{-\I w z}
  \varphi_\theta(w)^k\varphi_{\theta'}(w)^{n-k}
dw.
$$
With some calculation, the each term of the above expression can be written as 
$$
\int_{\gamma_\phi} 
  e^{-\I w z}
  \varphi_\theta(w)^k\varphi_{\theta'}(w)^{n-k}
dw
=
\frac{1}{(2\pi)^{n/2}}\int_0^\infty\int_{\R_{>0}^n}
 \exp\(-h_1(z,t) s+h_2(t)\)
dt ds,
$$
where we put $t:=(t_1,\dots,t_n)^\top$, $dt:=dt_1\dots dt_n$,
\begin{align*}
h_1(z,t)
&=\I e^{\I\phi}
\left(
   z
  -e^{\I\theta r}\sum_{\ell=1  }^kt_\ell^r
  -e^{\I\theta'r}\sum_{\ell=k+1}^nt_\ell^r
\right), &
h_2(t)
&=
-\frac{1}{2}
\left(
   e^{2\I\theta }\sum_{\ell=1  }^kt_\ell^2
  +e^{2\I\theta'}\sum_{\ell=k+1}^nt_\ell^2
\right).
\end{align*}
By the Fubini-Tonelli theorem, 
it is enough to consider the convergence of the following integral:
\begin{equation}\label{9}
\int_{\R_{>0}^n}\int_0^\infty \left|\exp\(-h_1(z,t)s+h_2(t)\)\right|ds dt.
\end{equation}
For $z\in -\exp(-\I\phi)H$, we have $\Re(\I ze^{\I\phi})>0$.
This inequality and the assumption
$\Re(-\I e^{\I(\phi+\theta r)})\geq 0$ and
$\Re(-\I e^{\I(\phi+\theta'r)})\geq 0$
imply $\Re h_1(z,t)>0$.
Hence, the integral \eqref{9} equals to
\begin{equation}\label{10}
\int_{\R_{>0}^n}\left|\exp\(h_2(t)\)\right|
\int_0^\infty \exp\(-s\Re h_1(z,t)\) ds dt
=
\int_{\R_{>0}^n}\frac{\left|\exp\(h_2(t)\)\right|}{\Re h_1(z,t)}dt.
\end{equation}
By the assumptions, the integral \eqref{10} is bounded from above by
\begin{align*}
\frac{1}{\Re(\I ze^{\I\phi})}
\int_{\R_{>0}^n}\left|\exp\(h_2(t)\)\right|dt
&=
\frac{1}{\Re(\I ze^{\I\phi})}
\left(\int_0^\infty e^{-\frac{1}{2}\cos(2\theta )t^2}dt\right)^k
\left(\int_0^\infty e^{-\frac{1}{2}\cos(2\theta')t^2}dt\right)^{n-k}\\
&=
\frac{1}{\Re(\I ze^{\I\phi})}
\left(\frac{\pi}{2\cos(\theta )}\right)^{k/2}
\left(\frac{\pi}{2\cos(\theta')}\right)^{(n-k)/2}
<\infty.
\end{align*}
Therefore, the integral \eqref{17} converges.
\end{proof}

\begin{lemma}
Suppose $\Im(e^{\I(\phi_k+\theta r)})\geq 0$ and $\Im(e^{\I(\phi_k+\theta'r)})\geq 0$
for $k=1,2$.
The equation 
$$\psi(z;\theta,\theta',\phi_1)=\psi(z;\theta,\theta',\phi_2)$$
holds for $z\in\(-\exp(-\I\phi_1)\)\cap\(-\exp(-\I\phi_2)\)$.
\end{lemma}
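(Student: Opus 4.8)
The plan is to establish the identity by rotating the ray of integration from $\gamma_{\phi_1}$ to $\gamma_{\phi_2}$, exactly as the earlier lemma rotated $\gamma_\theta$ into $\gamma_{\theta'}$. Set $g(w) := e^{-\I w z}\left(\varphi_\theta(w)-\varphi_{\theta'}(w)\right)^n$, and for $R>0$ let $S_R$ be the closed circular sector bounded by the radial segment $t e^{\I\phi_1}$ $(0\le t\le R)$, the radial segment $t e^{\I\phi_2}$ $(0\le t\le R)$, and the arc $\gamma_{3,R}(\phi'') := R e^{\I\phi''}$, where $\phi''$ runs from $\phi_1$ to $\phi_2$. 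The first step is to check that $g$ is holomorphic on (a neighbourhood of) $S_R$: for $w = \rho e^{\I\phi''}\in S_R$ one has $\Im\!\left(e^{\I(\phi''+\theta d)}\right)\ge 0$ and $\Im\!\left(e^{\I(\phi''+\theta' d)}\right)\ge 0$, since $\phi''\mapsto \sin(\phi''+\theta d)$ is nonnegative at $\phi''=\phi_1$ and $\phi''=\phi_2$ by hypothesis and keeps its sign in between (and likewise for $\theta'$); hence $w$ lies in $e^{-\I\theta d}\bar\H\cap e^{-\I\theta' d}\bar\H$, where $\varphi_\theta$ and $\varphi_{\theta'}$ are holomorphic by the two convergence lemmas above, while $e^{-\I w z}$ is entire.

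Next I would apply Cauchy's integral theorem to $g$ on $\partial S_R$, which gives $\int_{\gamma_{1,R}} g\,dw - \int_{\gamma_{2,R}} g\,dw + \int_{\gamma_{3,R}} g\,dw = 0$, where $\gamma_{1,R}$ and $\gamma_{2,R}$ are the two radial segments. By the preceding lemma the integral in \eqref{17} converges for $z$ in the region of the statement, so as $R\to\infty$ the first two terms tend to $\psi(z;\theta,\theta',\phi_1)$ and $\psi(z;\theta,\theta',\phi_2)$ respectively. Thus everything reduces to showing that the arc integral $\int_{\gamma_{3,R}} g\,dw$ tends to $0$.

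This last estimate is the only real obstacle. On $\gamma_{3,R}$ we have $\left|e^{-\I w z}\right| = e^{-R\,\Re(\I z e^{\I\phi''})}$; since $z$ lies in the region of the statement, $\Re(\I z e^{\I\phi_k})>0$ for $k=1,2$, and the same ``no sign change in between'' remark yields a $\delta>0$ with $\Re(\I z e^{\I\phi''})\ge\delta$ along the arc, so $\left|e^{-\I w z}\right|\le e^{-\delta R}$. For the remaining factor I would reuse the bound from the proof of the first convergence lemma, namely $\left|\varphi_\theta(R e^{\I\phi''})\right|\le \frac{1}{\sqrt{2\pi}}\int_0^\infty \exp\!\left(-R\sin(\phi''+\theta d)\,t^d - \frac{\cos 2\theta}{2}\,t^2\right)dt$, and check that the right-hand side is bounded uniformly in $R\ge 1$ and in $\phi''$ on the arc — the $t^d$ term dominates for large $t$, and in the degenerate case $\sin(\phi''+\theta d)=0$ the hypotheses force $\cos 2\theta>0$, so the second convergence lemma applies — and similarly for $\varphi_{\theta'}$. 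Combining, $|g|\le C e^{-\delta R}$ on $\gamma_{3,R}$ with $C$ independent of $R$, and since $\gamma_{3,R}$ has length $|\phi_1-\phi_2|\,R$ its contribution is at most $C\,|\phi_1-\phi_2|\,R\,e^{-\delta R}\to 0$, which completes the argument. The delicate point is the repeated claim that $\sin(\phi''+\,\cdot\,)$ does not change sign between $\phi_1$ and $\phi_2$; this is where one uses that these angles are close (at most $\pi$ apart), a constraint that in practice follows from the way $\theta,\theta',\phi_1,\phi_2$ arise and which I would add as an explicit hypothesis if it is not already guaranteed.
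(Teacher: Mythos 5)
Your proposal follows essentially the same route as the paper: rotate the ray $\gamma_{\phi_1}$ to $\gamma_{\phi_2}$ via Cauchy's theorem on the circular sector and show the arc contribution $\int_{\gamma_{3,R}}$ decays exponentially in $R$. The paper simply asserts the arc bound ``by some calculations,'' whereas you supply the details (holomorphy on the sector, the uniform bound on $\varphi_\theta$, the sign-persistence of $\Re(\I z e^{\I\phi''})$) and correctly flag the implicit requirement that $|\phi_1-\phi_2|<\pi$, which the paper also leaves tacit.
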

\begin{proof}
Put
\begin{align*}
\gamma_{1,R}(t) &:= te^{\I\phi_1} \,(0 \leq t \leq R), &
\gamma_{2,R}(t) &:= te^{\I\phi_2} \,(0 \leq t \leq R), &
\gamma_{3,R}(\phi) &:= Re^{\I\phi} \,(\phi_1 \leq \phi \leq \phi_2).
\end{align*}
By the Cauchy's integral formula, we have
\begin{equation}\label{12}
\int_{\gamma_{1,R}} \Phi dw
-\int_{\gamma_{2,R}} \Phi dw
+\int_{\gamma_{3,R}} \Phi dw
=0,
\end{equation}
where we put
$
\Phi =  e^{-\I w z}\left(\varphi_\theta(w)-\varphi_{\theta'}(w)\right)^n.
$
By some calculations, we have
$$
\left| \int_{\gamma_{3,R}} \Phi dw \right|
\leq M R\exp\(-R\)
$$
for sufficiently large $M>0$.
Taking the limit of \eqref{12} as $R$ approaches inifinity,
we have
$
\psi(z;\theta,\theta',\phi_1)-\psi(z;\theta,\theta',\phi_2)=0.
$
\end{proof}

\begin{theorem}\label{c8Jun2018}
Let $\varepsilon>0$ be a sufficiently small positive number
and $x$ be a continuous point of $f_X$.
When $r$ is odd, we have
$$
f_X(x)=
\begin{cases}
 \psi(x; \varepsilon,\pi            ,   -\varepsilon r)
-\psi(x;-\varepsilon,\pi            ,\pi+\varepsilon r)
& (x>0),\\
 \psi(x;           0,\pi-\varepsilon,    \varepsilon r)
-\psi(x;           0,\pi+\varepsilon,\pi-\varepsilon r)
& (x<0).
\end{cases}
$$
When $r$ is even, we have
$$
f_X(x)=
\begin{cases}
 \psi(x; \varepsilon,\pi+\varepsilon,   -\varepsilon r)
-\psi(x;-\varepsilon,\pi-\varepsilon,\pi+\varepsilon r)
& (x>0),\\
 \psi(x;           0,\pi            ,    \varepsilon r)
-\psi(x;           0,\pi            ,\pi-\varepsilon r)
& (x<0).
\end{cases}
$$
\end{theorem}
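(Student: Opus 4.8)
The plan is to begin from the boundary-value representation \eqref{14} and to evaluate the limit $y\to+0$ by deforming each of the two integration contours off the real axis onto a ray $\gamma_\phi$ along which the integrand decays at least like a Gaussian, so that the limit may be taken under the integral sign. Fix a small $\varepsilon>0$ and put $z_{\pm}=x\mp\I y$; re-parametrising, the two summands in \eqref{14} become $I_{+}(y)=\int_0^\infty e^{-\I z_- t}\varphi(t)^n\,dt$, an integral along $\gamma_0$, and $I_{-}(y)=-\int_{\gamma_\pi}e^{-\I z_+ w}\varphi(w)^n\,dw$, the minus sign coming from $\gamma_\pi'(t)=-1$. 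I will carry out the case $x>0$ with $d$ odd in detail; the other three cases run along identical lines with the directions of the two rotations, and hence the signs, adjusted.

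First I would replace $\varphi$ on each half-line by a difference of the continued integrals $\varphi_\theta$ of \eqref{7}: on $\R_{>0}$ I use $\varphi(t)=\varphi_\varepsilon(t)-\varphi_\pi(t)$ and on $\R_{<0}$ the identity $\varphi(w)=\varphi_{-\varepsilon}(w)-\varphi_\pi(w)$. These follow from the lemma guaranteeing convergence of \eqref{7} on $e^{-\I\theta d}\bar\H$ when $\cos 2\theta>0$ (so that $\varphi_{\pm\varepsilon}$ and $\varphi_\pi$ extend continuously to closed half-planes meeting the relevant half-line), combined with the lemma asserting $\varphi_\theta=\varphi_{\theta'}$ on overlaps when $|\theta-\theta'|<\pi/d$ together with unique continuation up to the boundary ray, which identify these continuations with $\varphi_0$ and with $\varphi_\pi$ there. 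For $d$ even the region $e^{-\I\pi d}\H$ equals $\H$ rather than $-\H$, so $\varphi_\pi$ may itself have to be tilted to $\varphi_{\pi\pm\varepsilon}$, which is the source of the different parameters in the $d$-even formulas.

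Next comes the contour rotation. For $I_{+}$, apply Cauchy's theorem on the sector bounded by $\gamma_0$ and $\gamma_{-\varepsilon d}$: on this sector $\varphi_\varepsilon$ and $\varphi_\pi$ are holomorphic and bounded — the sector lies in $e^{-\I\varepsilon d}\H$ and, since $d$ is odd, in $e^{-\I\pi d}\H=-\H$, and the Gaussian estimate underlying the convergence lemma gives the bound — while $|e^{-\I z_- w}|=e^{-|w|\Re(\I z_- e^{\I\arg w})}$ decays because $\arg w\in(-\varepsilon d,0)$ and $\I z_-$ lies in the open right half-plane; the arc of radius $R$ therefore contributes $O(Re^{-cR})\to 0$, and $I_{+}(y)=\int_{\gamma_{-\varepsilon d}}e^{-\I z_- w}(\varphi_\varepsilon(w)-\varphi_\pi(w))^n\,dw$. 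The analogous rotation of $\gamma_\pi$ onto $\gamma_{\pi+\varepsilon d}$ gives $I_{-}(y)=-\int_{\gamma_{\pi+\varepsilon d}}e^{-\I z_+ w}(\varphi_{-\varepsilon}(w)-\varphi_\pi(w))^n\,dw$. The precise target angles $-\varepsilon d$ and $\pi+\varepsilon d$, and the shifts $\pm\varepsilon$ in $\theta$ and $\theta'$, are dictated by the twofold requirement that the swept sector lie inside the convergence regions of both $\varphi_\theta$ and $\varphi_{\theta'}$ and that $\Re(\I z\,w)>0$ hold along the new ray; this is precisely where the parity of $d$ enters, through the position of $e^{-\I\pi d}\H$.

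Finally, on the rotated rays one has $|(\varphi_\theta-\varphi_{\theta'})(w)^n|\le Ce^{-c|w|^2}$ from the Gaussian estimate and $|e^{-\I z w}|\le e^{-c'|w|}$ — the latter valid even at $y=0$ exactly because $x\neq 0$ forces $\Re(\I x\,e^{\I\arg w})>0$ strictly along $\gamma_{-\varepsilon d}$ and $\gamma_{\pi+\varepsilon d}$ — so the integrand is dominated by a fixed integrable function for $y\in[0,y_0]$. Dominated convergence then yields $\lim_{y\to+0}I_{+}(y)=\psi(x;\varepsilon,\pi,-\varepsilon d)$ and $\lim_{y\to+0}I_{-}(y)=-\psi(x;-\varepsilon,\pi,\pi+\varepsilon d)$ in the notation of \eqref{17}, and substituting into \eqref{14} gives the stated formula for $x>0$, $d$ odd; the cases $x<0$ and $d$ even follow by the same scheme after re-choosing the four parameters. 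I expect the main obstacle to be exactly that re-choice: one must arrange that the swept sectors sit inside the holomorphy domains of the relevant $\varphi_\theta$ while, simultaneously, the phase $e^{-\I z w}$ decays along the new ray at $y=0$ — a constraint that just barely admits a solution and must be verified separately for $d$ odd and $d$ even.
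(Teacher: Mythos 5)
Your proposal follows essentially the same route as the paper: rewrite the boundary-value representation \eqref{14} as $\frac{1}{2\pi}\lim_{y\to+0}\left(\psi(x-\I y;0,\pi,0)-\psi(x+\I y;0,\pi,\pi)\right)$, tilt the inner parameters $\theta,\theta'$ using the identity $\varphi_\theta=\varphi_{\theta'}$ on overlaps, rotate the outer contour to $\gamma_{-\varepsilon d}$ (resp.\ $\gamma_{\pi+\varepsilon d}$) via the corresponding deformation lemma for $\psi$, and pass to the limit $y\to+0$, with your dominated-convergence justification of that last step being more explicit than the paper's bare assertion. Incidentally, for $d$ odd and $x>0$ your choice $(-\varepsilon,\pi)$ in the second term matches the theorem statement, whereas the paper's own computation writes $(0,\pi+\varepsilon)$ there — a choice that violates the hypothesis $\Im\left(e^{\I(\phi+\theta d)}\right)\geq 0$ of the convergence lemma at $\theta=0$, $\phi=\pi+\varepsilon d$ — so your version is, if anything, the more careful one.
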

\begin{proof}
When $r$ is odd and $x>0$, we have
\begin{align*}
f_X(x)&=
\frac{1}{2\pi}
\lim_{y\rightarrow +0}\left(
  \psi(x-\I y; 0,\pi,0)
 -\psi(x+\I y; 0,\pi,\pi)
\right)\\
&=
\frac{1}{2\pi}
\lim_{y\rightarrow +0}\left(
  \psi(x-\I y; \varepsilon,\pi,0)
 -\psi(x+\I y; 0,\pi+\varepsilon,\pi)
\right)\\
&=
\frac{1}{2\pi}
\lim_{y\rightarrow +0}\left(
  \psi(x-\I y; \varepsilon,\pi,-\varepsilon r)
 -\psi(x+\I y; 0,\pi+\varepsilon,\pi+\varepsilon r)
\right)\\
&=
\frac{1}{2\pi}
\(
  \psi(x; \varepsilon,\pi,-\varepsilon r)
 -\psi(x; 0,\pi+\varepsilon,\pi+\varepsilon r)
\)
\end{align*}

When $r$ is odd and $x<0$, we have
\begin{align*}
f_X(x)&=
\frac{1}{2\pi}
\lim_{y\rightarrow +0}\left(
  \psi(x-\I y; 0,\pi,0)
 -\psi(x+\I y; 0,\pi,\pi)
\right)\\
&=
\frac{1}{2\pi}
\lim_{y\rightarrow +0}\left(
  \psi(x-\I y; -\varepsilon,\pi,0)
 -\psi(x+\I y; 0,\pi-\varepsilon,\pi)
\right)\\
&=
\frac{1}{2\pi}
\lim_{y\rightarrow +0}\left(
  \psi(x-\I y; -\varepsilon,\pi,\varepsilon r)
 -\psi(x+\I y; 0,\pi-\varepsilon,\pi-\varepsilon r)
\right)\\
&=
\frac{1}{2\pi}
\(
  \psi(x; -\varepsilon,\pi,\varepsilon r)
 -\psi(x; 0,\pi-\varepsilon,\pi-\varepsilon r)
\)
\end{align*}

When $r$ is even and $x>0$, we have
\begin{align*}
f_X(x)&=
\frac{1}{2\pi}
\lim_{y\rightarrow +0}\left(
  \psi(x-\I y; 0,\pi,0)
 -\psi(x+\I y; 0,\pi,\pi)
\right)\\
&=
\frac{1}{2\pi}
\lim_{y\rightarrow +0}\left(
  \psi(x-\I y; \varepsilon,\pi+\varepsilon,0)
 -\psi(x+\I y; 0,\pi+\varepsilon,\pi)
\right)\\
&=
\frac{1}{2\pi}
\lim_{y\rightarrow +0}\left(
  \psi(x-\I y; \varepsilon,\pi+\varepsilon,-\varepsilon r)
 -\psi(x+\I y; 0,\pi+\varepsilon,\pi+\varepsilon r)
\right)\\
&=
\frac{1}{2\pi}
\(
  \psi(x; \varepsilon,\pi+\varepsilon,-\varepsilon r)
 -\psi(x; 0,\pi+\varepsilon,\pi+\varepsilon r)
\)
\end{align*}

When $r$ is odd and $x<0$, we have
\begin{align*}
f_X(x)&=
\frac{1}{2\pi}
\lim_{y\rightarrow +0}\left(
  \psi(x-\I y; 0,\pi,0)
 -\psi(x+\I y; 0,\pi,\pi)
\right)\\
&=
\frac{1}{2\pi}
\lim_{y\rightarrow +0}\left(
  \psi(x-\I y; -\varepsilon,\pi,0)
 -\psi(x+\I y; 0,\pi-\varepsilon,\pi)
\right)\\
&=
\frac{1}{2\pi}
\lim_{y\rightarrow +0}\left(
  \psi(x-\I y; -\varepsilon,\pi,\varepsilon r)
 -\psi(x+\I y; 0,\pi-\varepsilon,\pi-\varepsilon r)
\right)\\
&=
\frac{1}{2\pi}
\(
  \psi(x; -\varepsilon,\pi,\varepsilon r)
 -\psi(x; 0,\pi-\varepsilon,\pi-\varepsilon r)
\)
\end{align*}
\end{proof}

\begin{remark}
The all of integral representations in Theorem \ref{c8Jun2018}
make sense as Lebesgue integral.
As we have shown in Lemma \ref{d8Jun2018},
when we write the integral in Theorem \ref{c8Jun2018}
into an integral on $\mathbf R$,
the integrand is a Lebesgue integrable function.
\end{remark}

\normalsize

\medbreak
{\it Acknowledgements}:
We are grateful to K. Yajima, T. Oshima, and
S.-J. Matsubara-Heo 
for valuable comments to Proposition \ref{a7Jun2018}.
This work was supported by MEXT/JSPS KAKENHI Grand Numbers JP
25220001, 
18J01507. 

\iftrue

\else
  \bibliographystyle{plain}
  \bibliography{reference}
\fi

\end{document}